\documentclass[12pt]{amsart}       
\usepackage{geometry}
\usepackage{extarrows}
\usepackage{amssymb}
\usepackage{eucal}
\usepackage{bbm}
\usepackage{graphicx}
\usepackage{amsmath}
\usepackage{esint}
\usepackage{amscd}
\usepackage[all]{xy}           
\usepackage{amsfonts,latexsym}
\usepackage{xspace}
\usepackage{epsfig}
\usepackage{float}
\usepackage{color}
\usepackage{shuffle}
\usepackage{fancybox}
\usepackage{colordvi}
\usepackage{multicol}
\usepackage{mathrsfs}
\usepackage{ifpdf}
\ifpdf
  \usepackage[colorlinks,final,backref=page,hyperindex]{hyperref}
\else
  \usepackage[colorlinks,final,backref=page,hyperindex,hypertex]{hyperref}
\fi
\usepackage[active]{srcltx} 
\usepackage{tikz}
\DeclareMathAlphabet{\mathcal}{OMS}{cmsy}{m}{n}

\newtheorem{theorem}{Theorem}[section]
\newtheorem{proposition}[theorem]{Proposition}
\newtheorem{lemma}[theorem]{Lemma}
\newtheorem{coro}[theorem]{Corollary}
\newtheorem{assumption}[theorem]{Assumption}
\newtheorem{prop-def}{Proposition-Definition}[section]
\newtheorem{coro-def}{Corollary-Definition}[section]

\theoremstyle{definition}
\newtheorem{definition}[theorem]{Definition}
\newtheorem{remark}[theorem]{Remark}

\newcommand{\nc}{\newcommand}
\nc{\tred}[1]{\textcolor{red}{#1}}
\nc{\tblue}[1]{\textcolor{blue}{#1}}
\nc{\tgreen}[1]{\textcolor{green}{#1}}
\nc{\tpurple}[1]{\textcolor{purple}{#1}}
\nc{\btred}[1]{\textcolor{red}{\bf #1}}
\nc{\btblue}[1]{\textcolor{blue}{\bf #1}}
\nc{\btgreen}[1]{\textcolor{green}{\bf #1}}
\nc{\btpurple}[1]{\textcolor{purple}{\bf #1}}
\nc{\NN}{{\mathbb N}}
\nc{\ncsha}{{\mbox{\cyr X}^{\mathrm NC}}} \nc{\ncshao}{{\mbox{\cyr
X}^{\mathrm NC}_0}}

\newcommand{\delete}[1]{}

\nc{\mlabel}[1]{\label{#1}}
\nc{\mcite}[1]{\cite{#1}}
\nc{\mref}[1]{\ref{#1}}
\nc{\meqref}[1]{\eqref{#1}}
\nc{\mbibitem}[1]{\bibitem{#1}}

\delete{
\nc{\mlabel}[1]{\label{#1}{\hfill \hspace{1cm}{\bf{{\ }\hfill(#1)}}}}
\nc{\mcite}[1]{\cite{#1}{{\bf{{\ }(#1)}}}}
\nc{\mref}[1]{\ref{#1}{{\bf{{\ }(#1)}}}}
\nc{\meqref}[1]{\eqref{#1}{{\bf{{\ }(#1)}}}}
\nc{\mbibitem}[1]{\bibitem[\bf #1]{#1}}
}
\nc{\sha}{{\mbox{\cyr X}}}  
\newfont{\scyr}{wncyr10 scaled 550}
\nc{\ssha}{\mbox{\bf \scyr X}}
\nc{\shap}{{\mbox{\cyrs X}}} 
\nc{\shpr}{\diamond}    
\nc{\shp}{\ast} \nc{\shplus}{\shpr^+}
\nc{\shprc}{\shpr_c}    
\nc{\dep}{\mrm{dep}} \nc{\lc}{\lfloor} \nc{\rc}{\rfloor}
\nc{\db}{\leq_{\rm db}} \nc{\bfk}{{\bf k}}

\nc{\cala}{{\mathcal A}} \nc{\calb}{{\mathcal B}}
\nc{\calc}{{\mathcal C}}
\nc{\cald}{{\mathcal D}} \nc{\cale}{{\mathcal E}}
\nc{\calf}{{\mathcal F}} \nc{\calg}{{\mathcal G}}
\nc{\calh}{{\mathcal H}} \nc{\cali}{{\mathcal I}}
\nc{\call}{{\mathcal L}} \nc{\calm}{{\mathcal M}}
\nc{\caln}{{\mathcal N}} \nc{\calo}{{\mathcal O}}
\nc{\calp}{{\mathcal P}} \nc{\calr}{{\mathcal R}}
\nc{\cals}{{\mathcal S}} \nc{\calt}{{\mathcal T}}
\nc{\calu}{{\mathcal U}} \nc{\calw}{{\mathcal W}} \nc{\calk}{{\mathcal K}}
\nc{\calx}{{\mathcal X}} \nc{\CA}{\mathcal{A}}

\nc{\fraka}{{\mathfrak a}} \nc{\frakA}{{\mathfrak A}}
\nc{\frakb}{{\mathfrak b}} \nc{\frakB}{{\mathfrak B}}
\nc{\frakc}{{\mathfrak c}}
\nc{\frakD}{{\mathfrak D}} \nc{\frakF}{\mathfrak{F}}
\nc{\frakf}{{\mathfrak f}} \nc{\frakg}{{\mathfrak g}}
\nc{\frakH}{{\mathfrak H}} \nc{\frakL}{{\mathfrak L}}
\nc{\frakM}{{\mathfrak M}} \nc{\bfrakM}{\overline{\frakM}}
\nc{\frakm}{{\mathfrak m}} \nc{\frakP}{{\mathfrak P}}
\nc{\frakN}{{\mathfrak N}} \nc{\frakp}{{\mathfrak p}}
\nc{\frakS}{{\mathfrak S}} \nc{\frakT}{\mathfrak{T}}
\nc{\frakX}{{\mathfrak X}}

\font\cyr=wncyr10 \font\cyrs=wncyr7
\nc{\li}[1]{\textcolor{blue}{Nan:#1}}
\nc{\lir}[1]{\textcolor{red}{Li:#1}}
\nc{\yi}[1]{\textcolor{blue}{Yi: #1}}
\nc{\xing}[1]{\textcolor{purple}{Xing:#1}}
\nc{\revise}[1]{\textcolor{red}{#1}}
\nc{\nan}[1]{\textcolor{blue}{Nan:#1}}

\numberwithin{equation}{section}
\nc{\RR}{\mathbb{R}}
\nc{\X}{{\bf X}}
\nc{\E}{{\bf E}}
\nc{\x}{\mathbb{X}}
\nc{\C}{\mathcal{C}^{\alpha}}
\nc{\D}{\mathcal{D}^{\alpha}}
\nc{\CC}{\mathcal{C}_{\X}^{\alpha}}
\nc{\f}{\varphi}
\nc{\al}{\alpha}
\nc{\lbar}{\overline}
\nc{\HA}{\mathbb{S}}
\nc{\ha}{\mathcal{S}}
\nc{\cL}{\mathcal L}
\nc{\cD}{\mathcal D}

\nc{\V}{V} \nc{\pro}{\otimes}
\nc{\tng}{T^{\le N}(V)^{g}} \nc{\tn}{T^{\le N}(V)}
\nc{\ttg}{T^{\le 3}(V)^{g}}
\nc{\ZZ}{\mathbb{Z}} \nc{\etree}{1}
\nc{\xx}{\mathcal{X}}
\nc{\RP}{{\mathcal{D}}^{\alpha}([0, T]^2, V)}
\nc{\Y}{{\bf Y}}\nc{\id}{\text{id}} \nc{\Id}{\text{Id}}\nc{\Z}{{\bf Z}}
\nc{\sym}{\operatorname{Sym}} \nc{\tri}{\operatorname{Sym}}
\nc{\norm}[1]{\left\lVert#1\right\rVert}

\begin{document}

\title[Path-dependent RDEs in the full step-two range]{Path-dependent rough differential equations in the full step-two range}

\author{Xing Gao } 
\address{School of Mathematics and Statistics, Lanzhou University
Lanzhou, 730000, China; Gansu Provincial Research Center for Basic Disciplines of Mathematics
and Statistics, Lanzhou, 730070, China
}
\email{gaoxing@lzu.edu.cn}

\author{Nannan Li}
\address{School of Mathematics and Statistics, Lanzhou University
Lanzhou, 730000, China
}
\email{linn2024@lzu.edu.cn}

\begin{abstract}
We establish the existence of solutions to path-dependent rough differential equations with non-anticipative coefficients whose regularity is formulated in terms of horizontal and vertical Dupire derivatives. 
Under the same regularity assumptions as in Ananova~\cite{An23}, we develop a new causal Taylor argument and establish a second-order remainder estimate.
This removes the regularity loss responsible for the restriction $2\leq p<1+\sqrt{2}$ in~\cite{An23} and extends the existence result to the full range $2\le p<3$.
\end{abstract}

\makeatletter
\@namedef{subjclassname@2020}{\textup{2020} Mathematics Subject Classification}
\makeatother
\subjclass[2020]{
60L20, 
60H05, 
60H10, 
60H99, 
60L90. 
}

\keywords{rough path, path-dependent  coefficients, rough integral, rough differential equations.}

\maketitle

\tableofcontents

\setcounter{section}{0}

\allowdisplaybreaks

\section{Introduction}
Rough path theory, initiated by Lyons~\cite{Ly98}, provides a pathwise framework for differential equations of the form
\[
dY(t)=F\big(Y(t)\big)\,dX(t),\quad Y(0)=y\in \RR^m
\]
driven by highly irregular signals $X$, for which classical Riemann--Stieltjes or Young integration is no longer applicable.
Building on the foundational work of Lyons~\cite{Ly98}, rough path theory has developed into a general pathwise framework for the analysis of differential equations driven by irregular signals; see~ \cite{FH20, FV10}.
An important development was Gubinelli's theory of controlled rough paths~\cite{Gu04}, which provided a flexible analytic formulation of rough integration and rough differential equations. 
The theory has since been extended in several directions, including stochastic differential equations driven by a wide class of irregular processes, non-geometric and branched rough paths, discontinuous rough paths, and rough partial differential equations~\cite{FZ18, Gu10,Ha11, Ha13, HK15,LGM26}.
These developments have made rough path theory a robust tool in stochastic analysis and in the study of differential equations under highly irregular forcing.

Path dependence arises naturally in stochastic differential equations when the evolution of the system depends not only on its current state but also on its past trajectory.
Such equations are usually referred to as stochastic functional differential equations or stochastic delay differential equations, and have a long history in stochastic analysis and stochastic modelling; see~\cite{IN64, Ku68, Mo98}. 
The connection between stochastic differential equations and rough differential equations therefore naturally leads to rough equations with memory and path-dependent coefficients.

Several approaches to such equations have been developed. One classical strategy is to lift a path-dependent equation to an infinite-dimensional equation on a space of paths~\cite{Mo98}.
This approach was extended to rough differential equations in Banach spaces by Bailleul \cite{Ba14}, but it relies on Fr\'echet differentiability of the vector fields, thereby excluding many examples.
More specific forms of path dependence have also been studied extensively.
Neuenkirch, Nourdin and Tindel~\cite{NNT08} established existence, uniqueness and stability results for delay rough differential equations, while path-dependent bounded variation terms were considered by Aida and Deya et al.~\cite{Ai24, DGH19}. 
Keller and Zhang~\cite{KZ16} studied a different form of path dependence, where the coefficients depend on the driving rough path rather than on the solution. 
More recently, Kwossek, Neuenkirch and Pr\"omel~\cite{KNP25} established existence, uniqueness and stability results for rough functional differential equations with path-dependent coefficients driven by c\`adl\`ag rough paths under assumptions formulated directly on controlled-path spaces. 

A different route was taken by Ananova~\cite{An23}, who studied path-dependent RDEs through the non-anticipative functional calculus of Cont--Fourni\'e and Dupire~\cite{CF10b,CF13,Du19}.
In this framework, regularity is expressed in terms of horizontal and vertical derivatives on the stopped-path space, rather than Fr\'echet derivatives with respect to arbitrary perturbations of the whole history.
Based on directional derivatives rather than Fr\'echet differentiability, this functional calculus applies to a broader class of path-dependent ODEs and SDEs~\cite{CK20}.

The present work is primarily concerned with sharpening Ananova's existence result within this framework. 
We consider the following integral formulation of the path-dependent rough differential equation~\eqref{eq:5.1}:
\begin{equation}\label{eq:01}
Y(t) = \xi_{t_0}+\int_{t_0}^t b(s,Y_s)\,ds+\int_{t_0}^t \sigma(s,Y_s)\,d\mathbf X(s),
\quad t\in[t_0,T],
\end{equation}
where $\mathbf X=(X,\mathbb X)$ is a $p$-rough path with $2\le p<3$, and $b,\sigma$ are non-anticipative functionals.
We work under the same Dupire regularity assumptions as in~\cite{An23}.

Under these assumptions, Ananova~\cite{An23} proved existence of solutions to \eqref{eq:01} for $2\le p<1+\sqrt2$.
The restriction does not arise from rough integration itself, but from the regularity obtained for the functional composition appearing in the controlled rough path formulation. 
More precisely, for a regular non-anticipative functional $F$, the remainder
\[
F(t,x_t)-F(s,x_s)-\nabla_x F(s,x_s)x_{s,t}
\]
is shown in~\cite{An23} to have finite $q_p$-variation, where $q_p=p^2/(p+1)$ and $\nabla_x F$ denotes the Dupire vertical derivative of $F$.
The corresponding sewing condition is
\[
\frac1p+\frac1{q_p}>1,
\]
which is exactly equivalent to $p<1+\sqrt2$.
Thus the upper bound $1+\sqrt2$ reflects a loss in the regularity of the functional remainder rather than an intrinsic restriction of the rough differential equation.

The main purpose of the present paper is to remove this loss without strengthening the assumptions on the coefficients. 
We prove, under the same Dupire regularity conditions as in~\cite{An23}, that the functional remainder in fact has the canonical $p/2$-variation regularity for every $2\le p<3$. 
Our proof employs a new causal Taylor argument that differs from the approach in~\cite{An23}.

This improvement is the central analytical point of the paper.
Within the controlled rough path framework, removing this loss allows us to reach the full range $2\le p<3$. 
Once the canonical $p/2$-variation estimate is available, regular non-anticipative functionals preserve the standard controlled rough path structure, and the condition required by the sewing lemma becomes
\[
\frac1p+\frac2p>1,
\]
which is exactly the condition $p<3$.

We then combine the composition theorem for controlled rough paths with a Schauder fixed point argument to establish the existence of solutions to \eqref{eq:01}.
Consequently, the existence result of~\cite{An23} extends from $2\le p<1+\sqrt2$ to the full range $2\le p<3$ under the same Dupire regularity assumptions.

The paper is organized as follows.
Section~\ref{sec:2} recalls the non-anticipative functionals framework, Dupire derivatives and rough paths.
Section~\ref{sec:3} proves the new causal Taylor estimate.
Section~\ref{sec:4} establishes the corresponding composition and continuity results for controlled rough paths.
Section~\ref{sec:5} applies these results to establish the existence of solutions to \eqref{eq:01} via the Schauder fixed point theorem.

\section{Preliminaries}\label{sec:2}
We collect in this section the basic notions of rough path theory and non-anticipative functionals that will be used throughout the paper. 

\subsection{Rough paths}\label{sec:2.1}
Let $E$ be a finite-dimensional normed vector space. 
For a continuous path $X:[0,T]\to E$, we use the increment notation
\[
X_{s,t}:=X_t-X_s,\quad 0\le s\le t\le T.
\]
Its $p$-variation seminorm is defined by
\[
\|X\|_{p\text{-var};[0, T]}:=\Big(\sup_{\Pi} \sum_{[u,v]\in\Pi}|X_{u,v}|^{p}\Big)^{1/p},
\]
where the supremum is taken over all finite partitions $ \Pi=\{0=t_0<t_1<\cdots<t_n=T\}$ of $[0, T]$. 
We denote by $C^{p\text{-var}}([0,T],E)$ the space of continuous paths with finite $p$-variation. For a two parameter map
\[
R:\Delta_T\to E,\quad \Delta_T:=\{(s,t)\mid 0\le s\le t\le T\},
\]
we similarly set
\[
\|R\|_{p\text{-var};[0, T]}:=\Big(\sup_{\Pi}\sum_{[u,v]\in\Pi}|R_{u,v}|^{p}\Big)^{1/p}.
\]

It will be convenient to associate with $X$ the interval function
\[
\omega_X(s,t):=\|X\|_{p\text{-var};[s,t]}^{p}.
\]
By definition, we can obtain
\begin{equation*}
|X_{s,t}|^{p}\le\omega_X(s,t),
\end{equation*}
and
\begin{equation*}
\omega_X(s,u)+\omega_X(u,t)\le\omega_X(s,t),\quad 0\le s\le u\le t\le T.
\end{equation*}
This naturally leads to the notion of a control.

\begin{definition}\label{def:1}  
A map $\omega:\Delta_T\to[0,\infty)$ is called a \emph{control} if $\omega(t,t)=0$ for every $t\in[0,T]$ and
\[
\omega(s,u)+\omega(u,t)\le\omega(s,t)
\]
whenever $0\le s\le u\le t\le T$.
\end{definition}

We shall repeatedly use the elementary fact that if $\omega_1$ and $\omega_2$ are controls, then so are
$\omega_1^{\,r}$ with $r\ge1$ 
and $\omega_1^{\,\theta}\omega_2^{\,1-\theta}$ with $\theta\in(0,1)$.
The notion of a control gives an equivalent characterization of finite $p$-variation.

\begin{lemma}\label{lem:5}   
A continuous path $X:[0,T]\to E$ belongs to $C^{p\text{-var}}([0,T],E)$ if and only if there exists a control $\omega$ and constant $C$ such that
\[
|X_{s,t}|\le C\omega(s,t)^{\frac{1}{p}}, \quad 0\le s\le t\le T.
\]
Furthermore, if $|X_{s,t}|\le C\omega(s,t)^{1/p}$, then
\[
\norm{X}_{p\text{-var};[s,t]}\le C\omega(s,t)^{1/p}.
\]
\end{lemma}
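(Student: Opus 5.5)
For the forward implication in Lemma~\ref{lem:5}, take $\omega=\omega_X$ itself and $C=1$. By definition $\omega_X(s,t)=\|X\|_{p\text{-var};[s,t]}^p$. Three facts then need checking.
\begin{itemize}
\item $\omega_X(t,t)=0$: trivial.
\item Superadditivity: concatenating a partition of $[s,u]$ with a partition of $[u,t]$ gives a partition of $[s,t]$, and the sums add. Taking suprema separately yields $\omega_X(s,u)+\omega_X(u,t)\le\omega_X(s,t)$.
\item The bound $|X_{s,t}|^p\le\omega_X(s,t)$: use the trivial partition $\{s,t\}$.
\end{itemize}
Finiteness of $\omega_X$ on $\Delta_T$ holds because $\omega_X(s,t)\le\omega_X(0,T)<\infty$, which uses superadditivity and nonnegativity. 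So $\omega_X$ is a (finite) control with $|X_{s,t}|\le\omega_X(s,t)^{1/p}$.

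For the converse and the "furthermore" statement, fix $[s,t]$ and any partition $s=u_0<\dots<u_n=t$. Summing the hypothesis gives
\[
\sum_i|X_{u_i,u_{i+1}}|^p\le C^p\sum_i\omega(u_i,u_{i+1})\le C^p\omega(s,t).
\]
The last inequality is iterated superadditivity of $\omega$, proved by induction on $n$: $\omega(u_0,u_1)+\omega(u_1,u_n)\le\omega(u_0,u_n)$, and so on. Taking the supremum over partitions gives $\|X\|_{p\text{-var};[s,t]}^p\le C^p\omega(s,t)$, which is the claimed estimate. Specializing to $[s,t]=[0,T]$ shows $X\in C^{p\text{-var}}([0,T],E)$, since a control takes finite values and continuity is assumed.

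There is no real obstacle. The only points needing care are that controls are implicitly finite-valued, so that $\omega(0,T)<\infty$, and the induction for finite superadditivity over a partition. Continuity of $\omega_X$ is not required by Definition~\ref{def:1}, so we do not need to prove it.
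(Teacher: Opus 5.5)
Your proposal is correct and follows the paper's proof: take $\omega=\omega_X$ with $C=1$ for the forward direction, and for the converse and the ``furthermore'' estimate sum the hypothesis over a partition of $[s,t]$, use superadditivity of $\omega$, and take the supremum. Your extra checks that $\omega_X$ is a finite control are details the paper only records informally before Definition~\ref{def:1}.
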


\begin{proof}
If $X$ has finite $p$-variation, take $\omega(s,t)=\|X\|_{p\text{-var};[s,t]}^p$.
Conversely, for any partition $\Pi$ of $[s,t]$,
\[
\sum_{[u,v]\in\Pi}|X_{u,v}|^p\le C^p\sum_{[u,v]\in\Pi}\omega(u,v)\le C^p\omega(s,t).
\]
Taking the supremum over $\Pi$ completes the proof.
\end{proof}

Thus, estimates in $p$-variation may be viewed as the analogue of H\"older estimates with the deterministic quantity $|t-s|$ replaced by a general control. The following conclusion provides the quantitative estimates needed for the subsequent balancing argument.

\begin{proposition}\label{prop:1}
Let $p\ge2$, $X\in C^{p\text{-var}}([0,T], E)$, $[s,t]\subset[0,T]$, and $N\ge1$. There is a continuous path $X^{N;s,t}$ which agrees with $X$ on $[0,s]$, is polygonal with at most $N$ affine pieces on $[s,t]$, is extended constantly on $[t,T]$, and satisfies
$ X^{N;s,t}(s)=X(s)$, $X^{N;s,t}(t)=X(t)$. Then

\begin{align}
\norm{X^{N;s,t}-X}_{\infty;[0,t]}&\le 2N^{-1/p}\norm{X}_{p\text{-var};[s,t]},\label{eq:2.1}\\
\norm{X^{N;s,t}}_{1\text{-var};[s,t]}&\le N^{1-1/p}\norm{X}_{p\text{-var};[s,t]},\label{eq:2.2}\\
\norm{X^{N;s,t}}_{p\text{-var};[s,t]}&\le C_p\norm{X}_{p\text{-var};[s,t]},\label{eq:2.3}\\
\norm{X^{N;s,t}}_{p/2\text{-var};[s,t]}&\le C_pN^{1/p}\norm{X}_{p\text{-var};[s,t]},\label{eq:2.4}
\end{align}
where $C_p\in \RR$.
\end{proposition}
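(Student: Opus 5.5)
The plan is to build $X^{N;s,t}$ as the piecewise linear interpolation of $X$ along a partition of $[s,t]$ chosen so that each piece carries at most a fraction $1/N$ of the $p$-variation. Each of \eqref{eq:2.1}--\eqref{eq:2.4} should then follow from this equal-share property, together with Hölder's inequality and an interpolation of $\ell^r$ norms. I assume $\omega_X(s,t)>0$; otherwise $X$ is constant on $[s,t]$ and the constant extension works trivially.

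\emph{Construction.} Put $\omega:=\omega_X$ and $\lambda:=\omega(s,t)/N$. Since $X$ is continuous, $u\mapsto\omega(a,u)$ is continuous and nondecreasing (standard, cf.~\cite{FV10}). Define points greedily: $\tau_0=s$, and
\[
\tau_i=\sup\{u\in[\tau_{i-1},t]:\ \omega(\tau_{i-1},u)\le\lambda\}.
\]
Stop when $\tau_i=t$. Every piece with $\tau_i<t$ has $\omega(\tau_{i-1},\tau_i)=\lambda$, and the pieces are disjoint, so superadditivity of $\omega$ (Definition~\ref{def:1}) forces at most $N$ pieces. Thus $s=\tau_0<\dots<\tau_M=t$ with $M\le N$ and $\omega(\tau_{i-1},\tau_i)\le\omega(s,t)/N$ for every $i$. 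Set $X^{N;s,t}=X$ on $[0,s]$, the linear interpolation of the values $X(\tau_i)$ on $[s,t]$, and the constant $X(t)$ on $[t,T]$. Continuity and the endpoint identities are then immediate.

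\emph{Estimates \eqref{eq:2.1}--\eqref{eq:2.2}.} Take $r\in[\tau_{i-1},\tau_i]$ and write $\rho=(r-\tau_{i-1})/(\tau_i-\tau_{i-1})$. Then
\[
X^{N;s,t}(r)-X(r)=\rho X_{\tau_{i-1},\tau_i}-X_{\tau_{i-1},r},
\]
so by Lemma~\ref{lem:5} its norm is at most $2\omega(\tau_{i-1},\tau_i)^{1/p}\le 2N^{-1/p}\|X\|_{p\text{-var};[s,t]}$. The difference vanishes on $[0,s]$, which gives \eqref{eq:2.1}. For \eqref{eq:2.2}, the $1$-variation of a polygon is the sum of its segment lengths. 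Hölder's inequality with $M\le N$ terms gives
\[
\sum_i|X_{\tau_{i-1},\tau_i}|\le N^{1-1/p}\Big(\sum_i|X_{\tau_{i-1},\tau_i}|^p\Big)^{1/p}\le N^{1-1/p}\|X\|_{p\text{-var};[s,t]}.
\]

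\emph{Estimate \eqref{eq:2.3}: the main obstacle.} This is the standard fact that linear interpolation does not increase $p$-variation beyond a constant (cf.~\cite{FV10}, $C_p=3^{1-1/p}$), and it is the only step needing combinatorial care. Fix a partition of $[s,t]$ and sort its intervals $[u,v]$ into two types.
\begin{itemize}
\item \emph{Intervals inside a single segment} $[\tau_{i-1},\tau_i]$. Here $|X^{N;s,t}_{u,v}|^p=\big(\tfrac{v-u}{\tau_i-\tau_{i-1}}\big)^p|X_{\tau_{i-1},\tau_i}|^p$. Since a ratio $\le1$ satisfies $\text{ratio}^p\le\text{ratio}$ and the ratios of disjoint subintervals sum to at most $1$, their total contribution is at most $|X_{\tau_{i-1},\tau_i}|^p$.
\item \emph{Intervals containing grid points.} Split each as head, middle, tail: a partial segment at each end and a middle part from the first grid point to the last. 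The middle increment is a true increment of $X$. Use $(a+b+c)^p\le 3^{p-1}(a^p+b^p+c^p)$.
\end{itemize}
Each segment is touched by at most two heads/tails plus inside intervals, and their ratios still sum to at most $1$. So all partial pieces within one segment together contribute at most $|X_{\tau_{i-1},\tau_i}|^p$, and the middle parts are increments over disjoint intervals of $X$. Summing everything gives at most a constant times $\omega(s,t)$, which yields \eqref{eq:2.3}.

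\emph{Estimate \eqref{eq:2.4}.} Since $1\le p/2\le p$, I interpolate. Log-convexity of $\ell^r$ norms gives, for any finite sequence $a$,
\[
\|a\|_{\ell^{p/2}}\le\|a\|_{\ell^1}^{\theta}\|a\|_{\ell^p}^{1-\theta},\qquad \frac2p=\theta+\frac{1-\theta}{p},
\]
so $\theta=1/(p-1)$. Apply this to the increments of $X^{N;s,t}$ over an arbitrary partition of $[s,t]$ and take the supremum. Using \eqref{eq:2.2} and \eqref{eq:2.3},
\[
\|X^{N;s,t}\|_{p/2\text{-var};[s,t]}\le \big(N^{1-1/p}\big)^{1/(p-1)}C_p^{1-\theta}\|X\|_{p\text{-var};[s,t]}=C_pN^{1/p}\|X\|_{p\text{-var};[s,t]},
\]
after enlarging $C_p$ if necessary. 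When $p=2$ we have $\theta=1$ and this reduces to \eqref{eq:2.2}.
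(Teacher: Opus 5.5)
Your proposal is correct and takes essentially the same route as the paper: piecewise linear interpolation along a partition with $\omega_X(\tau_{i-1},\tau_i)\le\omega_X(s,t)/N$. From there, \eqref{eq:2.1} and \eqref{eq:2.2} follow from this equal-share property and H\"older, \eqref{eq:2.3} from the head/middle/tail split with the factor $3^{p-1}$, and \eqref{eq:2.4} from interpolating between exponents $1$ and $p$ with $\theta=1/(p-1)$. Your greedy construction, which the paper omits, and the ratio-sum bookkeeping in \eqref{eq:2.3} are welcome extra detail. One small precision is needed in the piece count: superadditivity alone only bounds the number of \emph{full} pieces (those with $\omega=\lambda$) by $N$. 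The bound of at most $N$ pieces in total comes from noting that after $N-1$ full pieces, $\omega(\tau_{N-1},t)\le\omega(s,t)-(N-1)\lambda=\lambda$, which forces $\tau_N=t$.
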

\begin{proof}
Let $$\omega_X(u,v)=\norm{X}_{p\text{-var};[u,v]}^p.$$ Choose points $s=u_0\le u_1\le\cdots\le u_N=t$ such that
$$\omega_X(u_i,u_{i+1})\le \omega_X(s,t)/N;$$ repeated points may be removed. Interpolate linearly between $X(u_i)$ and $X(u_{i+1})$. On each interval,
\[
\sup_{r\in[u_i,u_{i+1}]}|X(r)-X^{N;s,t}(r)|\le2\omega_X(u_i,u_{i+1})^{1/p},
\]
which gives \eqref{eq:2.1}. Moreover,
\[
\norm{X^{N;s,t}}_{1\text{-var};[s,t]}=\sum_i|X_{u_i,u_{i+1}}|\le N^{1-1/p}\Big(\sum_i|X_{u_i,u_{i+1}}|^p\Big)^{1/p},
\]
and the last factor is at most $\norm{X}_{p\text{-var};[s,t]}$. This proves \eqref{eq:2.2}.

We next prove the $p$-variation stability. 
Let $Z=X^{N;s,t}$ and take an arbitrary partition $\pi=(t_j)$. 
Intervals lying in a single piece are treated directly by linearity, since their total contribution is bounded by the corresponding mesh increment.
We choose $0\le a<b+1\le N$ such that $t_j\in [u_a,u_{a+1}]$ and $t_{j+1}\in [u_b,u_{b+1}]$. 
Then
\[
Z_{t_j, t_{j+1}}=Z_{t_j, u_{a+1}}+Z_{u_{a+1}, u_b}+Z_{u_b, t_{j+1}}.
\]
By the definition of $X^{N;s,t}$, there exist $\alpha_j, \beta_j\in [0, 1]$ such that
\[
Z_{t_j, t_{j+1}}=\alpha_jZ_{u_a, u_{a+1}}+Z_{u_{a+1}, u_b}+\beta_j Z_{u_b, u_{b+1}}.
\]
Using the inequality $$|a+b+c|^p\le 3^{p-1}(|a|^p+|b|^p+|c|^p),$$ we have
\begin{align*}
|Z_{t_j, t_{j+1}}|^p
&\ \le 3^{p-1}\Big(\big|\alpha_jZ_{u_a, u_{a+1}}\big|^p+|Z_{u_{a+1}, u_b}|^p+ \big|\beta_j Z_{u_b, u_{b+1}}\big|^p  \Big)\\
&\ \le 3^{p-1}\Big(|Z_{u_a, u_{a+1}}|^p+|Z_{u_{a+1}, u_b}|^p+ |Z_{u_b, u_{b+1}}|^p  \Big) .
\end{align*}
Hence
\[
\sum_j|Z_{t_j,t_{j+1}}|^p\le C_p\bigg(\norm{X}_{p\text{-var};[s,t]}^p+\sum_i|X_{u_i,u_{i+1}}|^p\bigg)
\le C_p\norm{X}_{p\text{-var};[s,t]}^p.
\]
Taking the supremum proves \eqref{eq:2.3}.

Finally, variation interpolation between exponents $1$ and $p$ gives
\[
\norm{Z}_{p/2\text{-var}}\le \norm{Z}_{1\text{-var}}^{1/(p-1)}\norm{Z}_{p\text{-var}}^{(p-2)/(p-1)}.
\]
Combining \eqref{eq:2.2} and \eqref{eq:2.3}, and observing that $(1-1/p)/(p-1)=1/p$, proves \eqref{eq:2.4}.  
\end{proof}

We now recall the definition of a $p$-rough path for $2\le p<3$.

\begin{definition}~\cite[Definition 2.1]{FH20}
\label{def:2}
Let $p\in[2,3)$. A \emph{$p$-rough path} over $\mathbb{R}^{d}$ is $\mathbf{X}=(X,\mathbb{X})$, where
\[
X:[0,T]\to\mathbb{R}^{d}, \quad\mathbb{X}:\Delta_T\to \mathbb{R}^{d}\otimes\mathbb{R}^{d}
\]
such that
\begin{enumerate}
\item Chen's relation holds:
\begin{equation*}
\mathbb{X}_{s,t}=\mathbb{X}_{s,u}+\mathbb{X}_{u,t}+X_{s,u}\otimes X_{u,t}, \quad s\le u\le t;
\end{equation*}
\item the first and second levels have finite variation of the corresponding orders:
\[
\|X\|_{p\text{-var};[0,T]} + \|\mathbb{X}\|_{\frac p2\text{-var};[0,T]}<\infty.
\]
\end{enumerate}
The subclass of continuous $p$-rough paths is denoted by $\mathcal{C}^{p\text{-var}}([0,T],\mathbb{R}^{d})$.
\end{definition}

The second level $\mathbb{X}$ contains precisely the additional information required to integrate against $X$ when the regularity of $X$ is too low for Young integration. 
In general, the lift of a path $X$ to a rough path $\mathbf X=(X,\mathbb X)$ is not unique; consequently, the second level is part of the driving signal rather than an object determined by $X$ alone.

The appropriate class of integrands for rough integration is the space of controlled paths introduced by Gubinelli\cite[Definition 1]{Gu04}.  Here we recall Ananova's analogue of controlled rough paths~\cite[Definition 2.4]{An23}.

\begin{definition}
\label{def:3}
Let $p>q\ge1$ and let $X\in C^{p\text{-var}}([0,T],\mathbb{R}^{d})$. A pair $(Y,Y')$ with
\[
Y\in C^{p\text{-var}}([0,T],\mathbb{R}^{k}),\quad Y'\in C^{p\text{-var}} \bigl([0,T],\mathcal{L}(\mathbb{R}^{d},\mathbb{R}^{k})\bigr)
\]
is said to be a \emph{$(p,q)$-controlled path} over $X$ if the remainder
\begin{equation}\label{eq:2.7}
R^{Y}_{s,t}:=Y_{s,t}-Y'_sX_{s,t}
\end{equation}
has finite $q$-variation. The corresponding space is denoted by $ \mathcal{D}^{p,q}_{X}([0,T],\mathbb{R}^{k})$. The path $Y'$ is called the \emph{Gubinelli derivative} of $Y$ with respect to $X$.
\end{definition}

The decomposition
\[
Y_{s,t} =Y'_sX_{s,t}+R^Y_{s,t}
\]
may be interpreted as a first-order expansion of $Y$ along the reference path $X$. 
The essential requirement is that the remainder is more regular than the leading term, which is encoded by $q<p$. 
The space $\mathcal D_X^{p,q}([0,T],\mathbb{R}^{k})$ becomes a Banach space when equipped with the norm
\begin{equation*}
\|(Y,Y')\|_{\mathcal D_X^{p,q}}:=|Y_0|+|Y'_0|+\|Y'\|_{p\text{-var};[0,T]}+\|R^Y\|_{q\text{-var};[0,T]}.
\end{equation*}
For later use, we also introduce the seminorm
\begin{equation*}
\|(Y,Y')\|_{p,q,X}:=\|Y'\|_{p\text{-var};[0,T]}+\|R^Y\|_{q\text{-var};[0,T]}.
\end{equation*}

We now recall the basic integration result for controlled paths. 
It is the key analytic input used in the sequel.

\begin{lemma}~\cite[Theorem 2.5]{An23}\label{lem:1}
Let $p\in[2,3)$ and let $\mathbf X=(X,\mathbb X) \in\mathcal C^{p\text{-var}}([0,T],\mathbb R^d)$. Suppose that $q\ge p/2$ and
$p^{-1}+q^{-1}>1$. Let $(Y,Y')\in\mathcal D_X^{p,q} \bigl([0,T], \mathcal L(\mathbb R^d,\mathbb R^k)\bigr)$. For a partition $\Pi$ of $[0,T]$, consider the compensated sums
\begin{equation*}
S(\Pi):=\sum_{[u,v]\in\Pi}\left(Y_uX_{u,v} +Y'_u\mathbb X_{u,v}\right).
\end{equation*}
Then 
\begin{enumerate}
\item the limit
\[
\int_0^T Y_r\,d\mathbf X_r:=\lim_{|\Pi|\to0}S(\Pi)
\]
exists. 

\item Moreover, for every $0\le s\le t\le T$,
\begin{equation*}
\bigg| \int_s^t Y_r\,d\mathbf X_r-Y_sX_{s,t}-Y'_s\mathbb X_{s,t}\bigg|\le C_{p,q,d}\bigg( \|R^Y\|_{q\text{-var};[s,t]} \|X\|_{p\text{-var};[s,t]}+\|Y'\|_{p\text{-var};[s,t]}\|\mathbb X\|_{\frac p2\text{-var};[s,t]}\bigg).
\end{equation*}

\item If
\[
Z_t:=\int_0^tY_r\,d\mathbf X_r,\quad  Z'_t:=Y_t,
\]
then $$(Z,Z')\in\mathcal{D}^{p,q}_{X}([0,T],\mathbb{R}^{k})$$ and
\begin{align*}
\|(Z,Z')\|_{p,q,X}
\le&\  \|Y\|_{p\text{-var};[0,T]}+ \|Y'\|_{\infty;[0,T]} \|\mathbb X\|_{\frac p2\text{-var};[0,T]}\\
&\ + C_{p,q,d}\left(\|X\|_{p\text{-var};[0,T]} \|R^Y\|_{q\text{-var};[0,T]}+\|Y'\|_{p\text{-var};[0,T]}\|\mathbb X\|_{\frac p2\text{-var};[0,T]}\right).
\end{align*}

\item In particular, the rough integration map
\[
(Y,Y')\mapsto\left(\int_0^\cdot Y_r\,d\mathbf X_r,Y\right)
\]
is continuous on the corresponding space of controlled paths.
\end{enumerate}
\end{lemma}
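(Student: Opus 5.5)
The plan is the standard sewing argument for the germ $\Xi_{u,v}:=Y_uX_{u,v}+Y'_u\mathbb X_{u,v}$, with all estimates expressed through a single control.

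\textbf{Step 1: the control.} Set
\[
\omega(s,t):=\|X\|_{p\text{-var};[s,t]}^p+\|\mathbb X\|_{\frac p2\text{-var};[s,t]}^{p/2}+\|Y'\|_{p\text{-var};[s,t]}^p+\|R^Y\|_{q\text{-var};[s,t]}^q .
\]
This is a control, since each summand is superadditive. Note that $\|Y\|_{p\text{-var}}<\infty$ and $Y'$ is bounded.

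\textbf{Step 2: the defect of the germ.} For $s\le u\le t$, write $\delta\Xi_{s,u,t}:=\Xi_{s,t}-\Xi_{s,u}-\Xi_{u,t}$. Chen's relation gives
\[
\delta\Xi_{s,u,t}=-R^Y_{s,u}X_{u,t}-Y'_{s,u}\mathbb X_{u,t}.
\]
Hence
\[
|\delta\Xi_{s,u,t}|\le \|R^Y\|_{q;[s,t]}\|X\|_{p;[s,t]}+\|Y'\|_{p;[s,t]}\|\mathbb X\|_{p/2;[s,t]}.
\]
In terms of $\omega$ this is at most $\omega(s,t)^{\theta}$ (up to constants), with $\theta=\min(1/q+1/p,\,3/p)$. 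The hypotheses give $\theta>1$: the first exponent exceeds $1$ by assumption, and $3/p>1$ because $p<3$.

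\textbf{Step 3: the maximal inequality on a fixed partition.} Fix a partition $\Pi$ of $[s,t]$ with $r$ intervals. Use Young's point-removal argument. By superadditivity there is an interior point $t_i$ whose two neighbouring intervals together carry $\omega(t_{i-1},t_{i+1})\le \frac{2}{r-1}\omega(s,t)$. Removing $t_i$ changes $S(\Pi)$ by exactly $\delta\Xi_{t_{i-1},t_i,t_{i+1}}$. Iterating down to the trivial partition gives
\[
|S(\Pi)-\Xi_{s,t}|\le C\sum_{r\ge2}\Big(\tfrac{2}{r-1}\Big)^\theta\omega(s,t)^\theta=C_\theta\,\omega(s,t)^\theta .
\]
The main care here is to keep the two terms separate. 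Running the same removal argument with each product estimated separately (each product is itself bounded by a control to power $>1$, using the control $\omega_1^{1/q}\omega_2^{1/p}$ normalized appropriately) yields exactly the bilinear bound in (2).

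\textbf{Step 4: existence of the limit.} Let $\Pi'$ refine $\Pi$. Apply Step 3 on each interval $[u,v]\in\Pi$ and sum:
\[
|S(\Pi')-S(\Pi)|\le C\sum_{[u,v]\in\Pi}\omega(u,v)^\theta\le C\max_{[u,v]\in\Pi}\omega(u,v)^{\theta-1}\,\omega(0,T).
\]
This tends to $0$ as $|\Pi|\to0$ by uniform continuity of $\omega$; continuity of $X$, $\mathbb X$, $Y'$ and $R^Y$ is what makes $\omega$ continuous. For two arbitrary partitions, compare each with their common refinement. This proves (1), and letting $\Pi$ refine in Step 3 gives (2).

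\textbf{Step 5: the controlled-path estimate (3).} Take $Z'=Y$. Its remainder is
\[
R^Z_{s,t}=Z_{s,t}-Y_sX_{s,t}=Y'_s\mathbb X_{s,t}+\bigl(Z_{s,t}-\Xi_{s,t}\bigr).
\]
For the first term, $q\ge p/2$ gives $\|Y'\mathbb X\|_{q}\le\|Y'\|_\infty\|\mathbb X\|_{p/2}$. For the second term, the bound in (2) has $q$-variation controlled by the product term. Here one needs $\|R^Y\|_{q;[u,v]}\|X\|_{p;[u,v]}$ to be summable to the $q$-th power over a partition, which follows from superadditivity of $\|R^Y\|_q^{q}$ together with $\|X\|_p\le\|X\|_{p;[0,T]}$. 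This gives the stated bound, and $\|Z'\|_p=\|Y\|_p$.

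\textbf{Step 6: continuity (4).} All the maps involved are linear in $(Y,Y')$: the germ, the integral, and the remainder. So the bound in (3), together with $\|Y\|_p\lesssim |Y'_0|$-type terms plus $\|Y'\|_\infty\|X\|_p+\|R^Y\|_q$, applied to differences gives Lipschitz continuity in the $\mathcal D^{p,q}_X$ norm.

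I expect the main obstacle to be Step 3: obtaining the constants in the mixed, non-homogeneous form of (2) rather than a bound by a single power of $\omega$.
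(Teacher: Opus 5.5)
The paper gives no proof of this lemma; it is quoted from Ananova~\cite[Theorem 2.5]{An23}, so there is no internal argument to compare with. Your argument is the standard proof of that result and it is correct: the germ $\Xi_{u,v}=Y_uX_{u,v}+Y'_u\mathbb X_{u,v}$, the defect identity $\delta\Xi_{s,u,t}=-R^Y_{s,u}X_{u,t}-Y'_{s,u}\mathbb X_{u,t}$ from Chen's relation, Young's point-removal maximal inequality, the Cauchy criterion via common refinements, and linearity for (4).

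Three places need one more line each.

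\textbf{Continuity of the controls in Step 4.} You say that continuity of the two-parameter maps $\mathbb X$ and $R^Y$ makes $\|\mathbb X\|^{p/2}_{\frac p2\text{-var};[s,t]}$ and $\|R^Y\|^q_{q\text{-var};[s,t]}$ continuous. This is true but not automatic, since two-parameter maps have no additivity. You can avoid it. Set $\theta=\frac1p+\frac1q$ and use the controls
$w_A:=(\|R^Y\|_{q\text{-var}}\|X\|_{p\text{-var}})^{1/\theta}$ and $w_B:=(\|Y'\|_{p\text{-var}}\|\mathbb X\|_{\frac p2\text{-var}})^{p/3}$. These are superadditive by H\"older, since the exponents of the $q$- and $p$-th power controls sum to one. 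Then
$\max_{[u,v]\in\Pi}w_A(u,v)\le \|R^Y\|_{q\text{-var};[0,T]}^{1/\theta}\max_{[u,v]\in\Pi}\|X\|_{p\text{-var};[u,v]}^{1/\theta}\to0$.
This uses only continuity of the one-parameter variation control of the continuous path $X$. The same works for $w_B$ with $Y'$ in place of $X$.

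\textbf{The bilinear bound (2).} The point removal must follow a single sequence of points; two independent removal procedures, one per term, do not combine. Choose each point using $\hat w:=w_A/w_A(s,t)+w_B/w_B(s,t)$, dropping a term whose normaliser vanishes. Since $\hat w(s,t)=2$, the removed defect at stage $r$ is at most
$\|R^Y\|_{q\text{-var};[s,t]}\|X\|_{p\text{-var};[s,t]}\bigl(\tfrac{4}{r-1}\bigr)^{\theta}+\|Y'\|_{p\text{-var};[s,t]}\|\mathbb X\|_{\frac p2\text{-var};[s,t]}\bigl(\tfrac{4}{r-1}\bigr)^{3/p}$.
Summing over $r$ gives (2), because $\theta>1$ and $3/p>1$.

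\textbf{The second product in Step 5.} The $Y'\mathbb X$ part of the bound from (2) also needs $q\ge p/2$:
$\sum_{[u,v]}\|Y'\|^q_{p\text{-var};[u,v]}\|\mathbb X\|^q_{\frac p2\text{-var};[u,v]}\le\|Y'\|^q_{p\text{-var};[0,T]}\bigl(\sum_{[u,v]}\|\mathbb X\|^{p/2}_{\frac p2\text{-var};[u,v]}\bigr)^{2q/p}\le \|Y'\|^q_{p\text{-var};[0,T]}\|\mathbb X\|^q_{\frac p2\text{-var};[0,T]}$.

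With these additions the argument is complete.
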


\subsection{Non-anticipative functionals}\label{sec:2.2}
In this subsection, we recall the notions of non-anticipative functionals and their regularity following~\cite{An23,CF10a}.

Let $D([0,T],\mathbb{R}^{d})$ denote the space of $\mathbb{R}^{d}$-valued c\`adl\`ag paths on $[0,T]$. 
For $x\in D([0,T],\mathbb{R}^{d})$ and $t\in[0,T]$, we write
\[
x_t(s):=x(s\wedge t), \quad s\in[0,T],
\]
for the path stopped at time $t$. 
A functional
\[
    F:[0,T]\times D([0,T],\mathbb{R}^{d})\to\mathbb{R}^{k}
\]
is called \emph{non-anticipative} if its value at time $t$ depends only on the history of the path up to $t$, namely,
\begin{equation}
F(t,x)=F(t,x_t), \quad (t,x)\in[0,T]\times D([0,T],\mathbb{R}^{d}).
\label{eq:2.5}
\end{equation}
Accordingly, it is natural to regard $F$ as a function defined on the space of stopped paths. 
More precisely, on $[0,T]\times D([0,T],\mathbb{R}^{d})$ we introduce the equivalence relation
\[
(t,x)\sim(t',x')
\]
whenever $t=t'$ and $x_t=x'_{t'}$. 
The corresponding quotient space is denoted by $\Lambda_T^d$. 
It is endowed with the metric
\begin{equation}
\begin{aligned}
d_\infty\big((t,x),(t',x')\big):=&\sup_{u\in[0,T]}\big|x(u\wedge t)-x'(u\wedge t')\big|+|t-t'|\\
=&\  \|x_t-x'_{t'}\|_\infty+|t-t'|.
\end{aligned}
\label{eq:2.6}
\end{equation}
With this distance, $(\Lambda_T^d,d_\infty)$ is a complete metric space. 
Hence every functional satisfying \eqref{eq:2.5} may be identified with a functional on $\Lambda_T^d$.

\begin{definition}\label{def:4} \cite{An23, CF10a}
A measurable mapping $F:\Lambda_T^d\to\mathbb{R}^{k}$ is called a \emph{non-anticipative functional}. 
We denote by $C^{0,0}(\Lambda_T^d)$ the class of functionals that are continuous with respect to $d_\infty$.
\end{definition}

Continuity in the metric $d_\infty$ implies joint continuity with respect to time and the stopped path. 
In what follows, we shall also use weaker continuity properties.

\begin{definition}\label{def:5} \cite{An23, CF10a}
Let $F:\Lambda_T^d\to\mathbb{R}^{k}$ be non-anticipative. We say that:

\begin{enumerate}
\item $F$ is \emph{continuous at fixed times} if, for every $t\in[0,T]$, the mapping
\[
F(t, \cdot) :D([0,T],\mathbb{R}^{d})\to \mathbb{R}^{k},\quad  x\mapsto F(t,x)
\]
is continuous with respect to the norm $\|\cdot\|_{\infty}$. Equivalently, for every $x\in D([0,T],\mathbb{R}^{d})$ and $\varepsilon>0$, there exists $\delta>0$ such that
\[
\|x_t-x'_t\|_\infty<\delta \quad\Longrightarrow\quad |F(t,x)-F(t,x')|<\varepsilon.
\]

\item $F$ is \emph{left-continuous} if, for every $(t,x)\in\Lambda_T^d$ and $\varepsilon>0$, there exists $\delta>0$ such that
\[
t'<t, \quad d_\infty\big((t,x),(t',x')\big)<\delta
\]
imply
\[
|F(t,x)-F(t',x')|<\varepsilon.
\]
The class of left-continuous non-anticipative functionals is denoted by $C_l^{0,0}(\Lambda_T^d)$.
\end{enumerate}
\end{definition}

We shall also require a local boundedness property adapted to the space of stopped paths.

\begin{definition}\label{def:6} \cite{An23, CF10a}
A non-anticipative functional $F:\Lambda_T^d\to\mathbb{R}^{k}$ is said to be \emph{boundedness-preserving} if, for every compact set $K\subset\mathbb{R}^{d}$ and every $t_0\in[0,T]$, there exists a constant $C_{K,t_0}>0$ such that
\[
|F(t,x)|\le C_{K,t_0}
\]
whenever $t\in[0,t_0]$ and $x([0,t])\subset K$.
\end{definition}

We now recall the horizontal and vertical derivatives of a non-anticipative functional~\cite{Du19}.
These derivatives describe two distinct perturbations of a stopped path: an extension of the time horizon while keeping the path frozen, and a perturbation of the terminal value.
Given $(t,x)\in\Lambda_T^d$ and $e\in\mathbb{R}^{d}$, define the vertically perturbed path by
\begin{equation*}
x_t^e:=x_t+e\mathbbm{1}_{[t,T]}.
\end{equation*}

\begin{definition}\label{def:7} \cite{Du19}
Let $F:\Lambda_T^d\to\mathbb{R}$ be a non-anticipative functional.
\begin{enumerate}
\item We say that $F$ is \emph{horizontally differentiable} at $(t,x)$ if the right derivative
\begin{equation*}
DF(t,x):=\lim_{h\downarrow0}\frac{F(t+h,x_t)-F(t,x)}{h}
\end{equation*}
exists. 
If this limit exists for every $(t,x)\in\Lambda_T^d$, then $DF$ defines another non-anticipative functional and is called the
\emph{horizontal derivative} of $F$.

\item We say that $F$ is \emph{vertically differentiable} at $(t,x)$ if the mapping
\[
g_{(t, x)}:\RR^d\to \RR, \quad e\mapsto F(t,x_t^e),
\]
is differentiable at $e=0$. Its gradient at the origin is denoted by
\begin{equation*}
\nabla_x F(t,x):=\nabla g_{(t, x)}(0)\in\mathbb{R}^{d},
\end{equation*}
and is called the \emph{vertical derivative} (or \emph{Dupire derivative}) of $F$.
\end{enumerate}
\end{definition}

Writing
\[
\nabla_xF(t,x)=\big(\partial_1F(t,x),\ldots,\partial_dF(t,x)\big),
\]
the $i$-th vertical derivative is explicitly given by
\begin{equation*}
\partial_iF(t,x)=\lim_{h\to0}\frac{F\bigl(t,x_t+h e_i\mathbf{1}_{[t,T]}\bigr)-F(t,x_t)}{h},
\end{equation*}
where $(e_1,\ldots,e_d)$ denotes the canonical basis of $\mathbb{R}^{d}$. 
Throughout, we identify the Euclidean gradient with the corresponding linear functional via the canonical inner product. 
In particular, for $v\in\mathbb{R}^d$, we write
\[
\nabla_x F(t,x)[v]:=\langle \nabla_x F(t,x),v\rangle.
\]
Likewise, we identify the Hessian with the associated bilinear form, and for $u,v\in\mathbb{R}^d$ write
\[
\nabla_x^2 F(t,x)[u,v]:=\left\langle \nabla_x^2 F(t,x)u,v\right\rangle.
\]

Since $DF$ and $\nabla_xF$, whenever they exist, are themselves non-anticipative functionals, the differentiation procedure can be iterated. 
In particular, higher-order vertical derivatives such as $\nabla_x^2F$ are defined in the natural way. This leads to the following class of regular functionals.

\begin{definition} \label{def:8} \cite{An23, CF10a}
For $k\ge1$, we denote by $C_b^{1,k}(\Lambda_T^d)$ the collection of non-anticipative functionals $F:\Lambda_T^d\to\mathbb{R}$ satisfying the following properties:
\begin{enumerate}
\item $F$ is horizontally differentiable and $DF$ is continuous at fixed times;

\item $F$ is $k$ times vertically differentiable and
\[
\nabla_x^jF\in C_l^{0,0}(\Lambda_T^d),\quad j=0,\ldots,k;
\]

\item the functionals $DF,\, \nabla_xF,\,\ldots,\,\nabla_x^kF$ preserve boundedness.
\end{enumerate}
\end{definition}

We denote by $C_b^{0,1}(\Lambda_T^d)$ the class of $F\in C^{0,0}(\Lambda_T^d)$ with left-continuous vertical derivatives, both $F$ and $\nabla_x F$ being boundedness-preserving.
Following Ananova \cite{An23}, we impose Lipschitz continuity with respect to the stopped-path metric.

\begin{assumption} \label{assum:1}
There exists a constant $K>0$ such that, for every $(t,x),(t',x')\in\Lambda_T^d$,
\begin{equation*}
|F(t,x)-F(t',x')|\le K\,d_\infty\big((t,x),(t',x')\big).
\end{equation*}
We denote the class of such functionals by $\operatorname{Lip}(\Lambda_T^d,d_\infty)$.
\end{assumption}

For later estimates, it is useful to separate the dependence on the path variable from the dependence on time.

\begin{assumption}\label{assum:2}
There exists $K>0$ such that, for every $x,x'\in D([0,T],\mathbb{R}^{d})$ and $t\in[0,T]$,
\begin{equation*}
|F(t,x)-F(t,x')|\le K\|x_t-x'_t\|_\infty.
\end{equation*}
The corresponding class is denoted by $\operatorname{Lip}(\Lambda_T^d,\|\cdot\|_\infty)$.
\end{assumption}

\begin{assumption}\label{assum:3}
There exists $K>0$ such that, for every $x\in D([0,T],\mathbb{R}^{d})$ and $0\le t'\le t\le T$,
\begin{equation*}
|F(t,x_{t'})-F(t',x_{t'})|\le K|t-t'|.
\end{equation*}
We write $\operatorname{hLip}(\Lambda_T^d)$ for the corresponding class of functionals.
\end{assumption}

By the definition of $d_\infty$, Lipschitz continuity with respect to the stopped-path metric is equivalent to requiring both uniform Lipschitz continuity in the path variable and Lipschitz continuity in the time variable. 
Consequently,
\begin{equation*}
\operatorname{Lip}(\Lambda_T^d,d_\infty)=\operatorname{Lip}(\Lambda_T^d,\|\cdot\|_\infty)\cap\operatorname{hLip}(\Lambda_T^d).
\end{equation*}

\section{The canonical causal Taylor estimate}\label{sec:3}
In this section, we establish a new causal Taylor remainder estimate for non-anticipative functionals.

In what follows, we write $\nabla$ in place of $\nabla_x$ for simplicity. 
For $s\in[0,T]$, let $\eta\in C([0,s],\mathbb{R}^k)$ be a continuous history and $z\in C([s,T],\mathbb{R}^k)$ a continuous path satisfying $z(s)=\eta(s)$. 
We denote by $\eta\oplus_s z$ their concatenation at time $s$. 
We first record a localized version of the finite variation chain rule, which requires finite variation only on the future part $z$, while the past history $\eta$ is merely continuous.

\begin{lemma}\label{lem:2}
Assume that $F\in C_b^{1,1}(\Lambda_T^k)$ and that $F,DF,\nabla F$ are continuous in $d_\infty$. Let $x=\eta\oplus_s z$, where $\eta$ is continuous and $z$ is continuous of finite variation on $[s, T]$. Then, for any $t\in (s, T]$,
\begin{equation*}
F(t,x_t)-F(s,x_s)=\int_s^t DF(r,x_r)\,d r+\int_s^t\nabla F(r,x_r)\,d z_r.
\end{equation*}
\end{lemma}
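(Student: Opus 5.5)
We follow the Cont--Fournié proof of the functional Itô formula, now in the finite-variation case without a quadratic-variation term. The idea is to discretize $[s,t]$, replace $x$ by piecewise-constant approximations, split each increment into a horizontal and a vertical step, and pass to the limit using the $d_\infty$-continuity of $F$, $DF$, $\nabla F$.

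Fix $t\in(s,T]$ and a sequence of partitions $\pi_n=\{s=t_0^n<\dots<t_{k_n}^n=t\}$ with mesh tending to $0$. Define the càdlàg approximation
\[
x^n(u):=\sum_i x(t_{i+1}^n)\mathbbm 1_{[t_i^n,t_{i+1}^n)}(u)+x(t)\mathbbm 1_{\{t\}}(u)\quad (u\ge s),
\qquad x^n=\eta \text{ on } [0,s).
\]
Since $x$ is uniformly continuous on $[0,t]$, we have $\|x^n-x\|_{\infty;[0,t]}\to0$. Only the future part is discretized, so no regularity of $\eta$ beyond continuity is needed. Write $h_i=t^n_{i+1}-t^n_i$ and $\delta z_i=z(t^n_{i+1})-z(t^n_i)$, and decompose each increment as
\[
F(t_{i+1},x^n_{t_{i+1}-})-F(t_i,x^n_{t_i-})
=\big[F(t_{i+1},x^n_{t_{i+1}-})-F(t_i,x^n_{t_i})\big]+\big[F(t_i,x^n_{t_i})-F(t_i,x^n_{t_i-})\big].
\]
Since $x^n_{t_{i+1}-}$ is the horizontal extension of $x^n_{t_i}$, the first bracket equals $\int_{t_i}^{t_{i+1}}DF(r,x^n_{t_i})\,dr$. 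This holds because $r\mapsto F(r,x^n_{t_i})$ has right derivative $DF$, and this right derivative is continuous in $r$ (by $d_\infty$-continuity of $DF$), so the function is $C^1$. The second bracket is a vertical jump of size $\delta z_{i-1}$ (with the appropriate indexing). By the mean value theorem applied to $e\mapsto F(t_i,x^n_{t_i-}+e\mathbbm 1_{[t_i,T]})$, it equals $\nabla F(t_i,x^n_{t_i-}+\theta\delta z\mathbbm 1_{[t_i,T]})\,\delta z$ for some $\theta\in[0,1]$. Summing telescopes to $F(t,x^n_{t-})-F(s,x^n_{s-})$ up to endpoint corrections, and these corrections vanish by continuity of $F$ together with $\|x^n-x\|_\infty\to0$. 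The endpoints converge to $F(t,x_t)-F(s,x_s)$.

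For the limits, the horizontal sum equals $\int_s^t DF(r,x^n_{\lfloor r\rfloor_n})\,dr$. The integrand converges pointwise to $DF(r,x_r)$ because $d_\infty((r,x_r),(\lfloor r\rfloor_n,x^n_{\lfloor r\rfloor_n}))\to0$. The integrand is also bounded, since all stopped paths lie in a fixed compact set and $DF$ preserves boundedness. Dominated convergence then gives $\int_s^t DF(r,x_r)\,dr$. The vertical sum is $\sum_i\nabla F(\xi_i^n)\,\delta z_i$, where $\xi_i^n$ is within $d_\infty$-distance $o(1)$ of $(t_i,x_{t_i})$, uniformly in $i$. This uniformity follows from the uniform continuity of $x$ and the bound $|\delta z_i|\to0$ uniformly.

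The main obstacle is this last Riemann--Stieltjes step. We need $\sup_i|\nabla F(\xi^n_i)-\nabla F(t_i,x_{t_i})|\to0$, which requires uniform continuity of $\nabla F$ along the compact set $\{(r,x_r):r\in[s,t]\}\subset\Lambda_T^k$. This set is compact because it is the image of $[s,t]$ under the continuous map $r\mapsto(r,x_r)$. A continuous function is uniformly continuous near a compact set in the sense that for every $\varepsilon$ there is $\delta$ with $|\nabla F(\xi)-\nabla F(r,x_r)|<\varepsilon$ whenever $d_\infty(\xi,(r,x_r))<\delta$; this follows by a standard covering argument. Given this, the error is bounded by $\varepsilon\,\|z\|_{1\text{-var};[s,t]}$. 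The remaining sum $\sum_i\nabla F(t_i,x_{t_i})\delta z_i$ converges to $\int_s^t\nabla F(r,x_r)\,dz_r$, since $r\mapsto\nabla F(r,x_r)$ is continuous and $z$ has bounded variation. Combining the two limits gives the stated identity.
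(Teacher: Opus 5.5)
Your proof is correct, but it takes a different route from the paper's.

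The paper does not rerun the discretization. It approximates the \emph{past} $\eta$ by polygonal histories $\eta^n$ with $\eta^n(s)=\eta(s)$ and $\eta^n\to\eta$ uniformly, so that $x^n=\eta^n\oplus_s z$ has finite variation on all of $[0,t]$. It then applies the standard Cont--Fourni\'e finite-variation chain rule to $x^n$ as a black box. Finally it passes to the limit using three facts: $d_\infty\bigl((r,x^n_r),(r,x_r)\bigr)=\|\eta^n-\eta\|_\infty$ for $r\in[s,t]$, the $d_\infty$-continuity of $F$, $DF$, $\nabla F$, and the fact that the integrator $z$ does not change with $n$.

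You instead reprove the chain rule directly, discretizing only the future segment $[s,t]$ and leaving $\eta$ untouched. The irrelevance of the regularity of $\eta$ is thus built into your construction rather than obtained by a density argument. The paper's version is much shorter given the literature. Yours is self-contained and makes clear that finite variation enters only through the Riemann--Stieltjes sum over future increments. It also benefits from the assumed $d_\infty$-continuity of $DF$, which turns the horizontal step into an elementary $C^1$ argument. Your compactness lemma (uniform continuity of $\nabla F$ near $\{(r,x_r): r\in[s,t]\}$) is exactly what the paper's phrase ``the integrands converge uniformly by continuity'' tacitly needs, although bounded pointwise convergence plus dominated convergence would suffice there.

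Two harmless slips. First, the vertical jump of $x^n$ at $t_i$ is $x(t_{i+1})-x(t_i)=\delta z_i$, not $\delta z_{i-1}$. Second, the horizontal integrand sits at the point $(r,x^n_{\lfloor r\rfloor_n})$, so the relevant distance is $\|x_r-x^n_{\lfloor r\rfloor_n}\|_\infty$; this tends to zero just as well. Note also that $x^n_{s-}=x_s$ exactly, so only the endpoint at $t$ requires a limit.
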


\begin{proof}
Choose polygonal histories $\eta^n$ on $[0,s]$ such that $\eta^n(s)=\eta(s)$ and $\eta^n\to\eta$ uniformly.  Set $x^n=\eta^n\oplus_s z$. Each $x^n$ has finite variation on $[0,t]$, so the standard functional finite variation chain rule (see~\cite{CF10a}) applies on $[s,t]$:
\[
F(t,x^n_t)-F(s,x^n_s)=\int_s^tDF(r,x^n_r)\,d r+\int_s^t\nabla F(r,x^n_r)\,d z_r.
\]
For $r\in[s,t]$, 
\[
d_\infty\big((r,x^n_r),(r,x_r)\big)=\|x^n_t-x_t\|_\infty=\|\eta^n-\eta\|_\infty.
\]  
Hence the integrands converge uniformly by continuity.  The Lebesgue integral converges by dominated convergence and the Stieltjes integral converges because its integrator $z$ is fixed and of finite variation.  The endpoint terms converge as well. 
\end{proof}

We impose the following regularity assumption throughout the paper.

\begin{assumption}\label{assum:4}
We assume that the vector field $F:\Lambda_T^k\to\RR^d$ satisfies
\[
F\in C_b^{1,2}(\Lambda_T^k, \RR^d), \quad \nabla F\in C_b^{1,1}(\Lambda_T^k;\cL(\RR^k, \RR^d)),
\]
and 
\[
F,\quad DF,\quad \nabla F,\quad D\nabla F,\quad \nabla^2F
\]
are globally Lipschitz in $d_\infty$.
\end{assumption}

This assumption is given based on the regularity in the sense of Dupire~\cite{CF10b, Du19}, and is the same as~\cite[Assumption 4.3]{An23}. 
We now record the second order identity from which the estimate follows. Set
\[
\mathcal T^F_{s,t}(x):=F(t,x_t)-F(s,x_s)-\nabla_x F(s,x_s)[x_{s, t}].
\]

\begin{proposition}\label{prop:2}
Let $F$ satisfy  Assumption~\ref{assum:4} and let $x:[0,T]\to\RR^k$ be continuous of finite variation on $[s,t]\subseteq  [0,T]$. Then
\begin{equation*}
\mathcal T^F_{s,t}(x)=\int_s^t DF(r,x_r)\, d r+\int_s^t D\nabla F(r,x_r)[x_{r, t}]\, d r+\int_s^t\nabla^2F(r,x_r)[x_{r, t}, d x_r].
\end{equation*}

\end{proposition}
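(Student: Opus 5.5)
The plan is to apply Lemma~\ref{lem:2} twice, once to $F$ and once to $\nabla F$, and then rearrange the resulting iterated integral by integration by parts for continuous paths of bounded variation. Throughout, write $x=\eta\oplus_s z$ with $\eta=x|_{[0,s]}$, which is continuous, and $z=x|_{[s,T]}$, which is continuous and of finite variation on $[s,t]$. Then $x$ meets exactly the hypotheses of Lemma~\ref{lem:2}.

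\textbf{Step 1 (first-order expansion).} Assumption~\ref{assum:4} gives $F\in C_b^{1,2}\subset C_b^{1,1}$, and $F,DF,\nabla F$ are Lipschitz, hence continuous, in $d_\infty$. Lemma~\ref{lem:2} therefore yields
\[
F(t,x_t)-F(s,x_s)=\int_s^t DF(r,x_r)\,dr+\int_s^t \nabla F(r,x_r)\,dx_r .
\]
Subtracting $\nabla F(s,x_s)[x_{s,t}]=\int_s^t \nabla F(s,x_s)\,dx_r$ gives
\[
\mathcal T^F_{s,t}(x)=\int_s^t DF(r,x_r)\,dr+\int_s^t A_r\,dx_r,\qquad A_r:=\nabla F(r,x_r)-\nabla F(s,x_s).
\]

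\textbf{Step 2 (expanding the Gubinelli-type coefficient).} Assumption~\ref{assum:4} gives $\nabla F\in C_b^{1,1}$, and $\nabla F$, $D\nabla F$, $\nabla^2F$ are Lipschitz in $d_\infty$. Applying Lemma~\ref{lem:2} to $\nabla F$ componentwise on $[s,r]$, for each $r\in(s,t]$, gives
\[
A_r=\int_s^r D\nabla F(u,x_u)\,du+\int_s^r \nabla^2F(u,x_u)[\,\cdot\,,dx_u].
\]
In particular, $r\mapsto A_r$ is continuous and of bounded variation on $[s,t]$, with $A_s=0$ and $dA_r=D\nabla F(r,x_r)\,dr+\nabla^2F(r,x_r)[\,\cdot\,,dx_r]$.

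\textbf{Step 3 (integration by parts).} Both $A$ and $x$ are continuous and of bounded variation on $[s,t]$, so neither has atoms and the Stieltjes integration by parts formula holds with no jump corrections:
\[
\int_s^t A_r\,dx_r=A_tx_t-A_sx_s-\int_s^t dA_r\,x_r=\int_s^t dA_r\,[x_t-x_r]=\int_s^t dA_r\,[x_{r,t}],
\]
where the middle equality uses $A_s=0$ and $A_t=\int_s^t dA_r$. Inserting the expression for $dA_r$ from Step~2 produces exactly
\[
\int_s^t D\nabla F(r,x_r)[x_{r,t}]\,dr+\int_s^t\nabla^2F(r,x_r)[x_{r,t},dx_r],
\]
and combining this with Step~1 gives the claimed identity. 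Equivalently, one can argue by Fubini for the product measure $dx\otimes dx$ on the triangle $\{s\le u\le r\le t\}$; the diagonal is null because $x$ is continuous.

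\textbf{Main obstacle.} The delicate point is bookkeeping rather than analysis. One must keep the slot ordering of the bilinear form $\nabla^2F[\cdot,\cdot]$ consistent with the paper's convention $\nabla^2F[u,v]=\langle\nabla^2F\,u,v\rangle$, so that the vertical increment $x_{r,t}$ lands in the first slot and $dx_r$ in the second. If the Hessian is not symmetric, the two orders differ, and I would fix the convention by tracking which argument came from differentiating $\nabla F$ in Step~2. A second point to check is that $A$ really is of bounded variation, so that integration by parts is legitimate; this follows from the representation in Step~2, since the integrands there are bounded along the compact path $x|_{[0,t]}$ by the boundedness-preserving property.
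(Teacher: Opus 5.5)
Your proposal is correct and is essentially the paper's proof: Lemma~\ref{lem:2} is applied to $F$ and then componentwise to $\nabla F$, followed by an exchange of the order of integration. Your Stieltjes integration by parts with $A_s=0$ is exactly the Fubini step the paper performs on $\{s\le u\le r\le t\}$. The slot-ordering concern is harmless, since $\nabla^2F(t,x)$ is the Hessian at $0$ of $e\mapsto F(t,x_t^e)$ and is therefore symmetric.
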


\begin{proof}
Applying Lemma~\ref{lem:2}  to $F$ on $[s,t]$ yields
\begin{equation}\label{eq:3.1}
F(t,x_t)-F(s,x_s)=\int_s^tDF(r,x_r)\, d r+\int_s^t\nabla F(r,x_r)\, d x_r.
\end{equation}
Since $\nabla F\in C_b^{1,1}(\Lambda_T^k;\cL(\RR^k, \RR^d))$, applying Lemma~\ref{lem:2} componentwise to $\nabla F$ yields
\[
\nabla F(r,x_r)-\nabla F(s,x_s)=\int_s^rD\nabla F(u,x_u)\, d u+\int_s^r\nabla^2F(u,x_u)\, d x_u.
\]
Insert this into the second integral in~\eqref{eq:3.1}. Fubini's theorem for Lebesgue--Stieltjes integrals yields
\[
\int_s^t\int_s^rD\nabla F(u,x_u)\, d u\, d x_r =\int_s^tD\nabla F(u,x_u)[x_{u, t}]\, d u
\]
and
\[
\int_s^t\int_s^r\nabla^2F(u,x_u)\, d x_u\, d x_r=\int_s^t\nabla^2F(u,x_u)[x_{u, t}, d x_u].
\]
Subtracting $\nabla F(s,x_s)[x_{s, t}]$ proves the formula.
\end{proof}

For the subsequent processing, we present some conclusions.

\begin{lemma}\label{lem:4}
For every symmetric bilinear map $H$ and every continuous finite variation path $x$,
\begin{equation*}
\int_s^t H[x_{r, t}, d x_r]=\frac12H[x_{s, t}, x_{s, t}].
\end{equation*}
\end{lemma}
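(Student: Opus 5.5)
The identity is the integration-by-parts (Leibniz) rule for continuous finite-variation paths, combined with the symmetry of $H$. Note that $H$ is a constant bilinear map here, not a functional of the path. Fix $s\le t$ and set
\[
\phi(r):=H[x_{r,t},x_{r,t}],\qquad r\in[s,t].
\]
Since $x$ is continuous and of finite variation, so is $r\mapsto x_{r,t}=x_t-x_r$, with $d(x_{r,t})=-dx_r$. The map $(a,b)\mapsto H[a,b]$ is bilinear, so the Stieltjes product rule applies without correction terms. These terms vanish because continuity excludes jumps and finite variation excludes a quadratic covariation. This gives
\[
d\phi(r)=H[d(x_{r,t}),x_{r,t}]+H[x_{r,t},d(x_{r,t})]=-H[dx_r,x_{r,t}]-H[x_{r,t},dx_r].
\]
By symmetry of $H$ the two terms coincide, so $d\phi(r)=-2H[x_{r,t},dx_r]$.

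Next I integrate from $s$ to $t$. Because $x_{t,t}=0$, we have $\phi(t)=0$, while $\phi(s)=H[x_{s,t},x_{s,t}]$. Hence
\[
-H[x_{s,t},x_{s,t}]=\phi(t)-\phi(s)=-2\int_s^tH[x_{r,t},dx_r],
\]
which is exactly the claimed identity.

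The only step that needs care is justifying the product rule for the Lebesgue--Stieltjes integral. I would handle it in one of two ways. The first is to expand in a basis, $H[a,b]=\sum_{i,j}h_{ij}a^ib^j$ with $h_{ij}=h_{ji}$, which reduces the claim to the scalar identity $\int_s^t f\,dg+\int_s^t g\,df=f(t)g(t)-f(s)g(s)$ for continuous BV functions $f,g$. The second is a direct argument: write
\[
\int_s^t x^i_{r,t}\,dx^j_r=\int_s^t\int_r^t dx^i_u\,dx^j_r,
\]
then use Fubini on the triangle together with continuity, so that the diagonal carries no mass. Symmetrizing in $i,j$ then gives
\[
\int_s^t\big(x^i_{r,t}\,dx^j_r+x^j_{r,t}\,dx^i_r\big)=x^i_{s,t}x^j_{s,t},
\]
and hence the result.
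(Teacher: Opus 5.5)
Your proof is correct and follows the same route as the paper: differentiate $r\mapsto H[x_{r,t},x_{r,t}]$ using $d(x_{r,t})=-dx_r$ and the symmetry of $H$ to get $-2H[x_{r,t},dx_r]$, then integrate from $s$ to $t$. Your justification of the Stieltjes product rule, via a basis expansion or via Fubini on the triangle with continuity removing the diagonal, fills in a detail that the paper's two-line proof leaves implicit.
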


\begin{proof}
Since $r\mapsto x_{r, t}$ has differential $-d x_r$,
\[
dH[x_{r, t}, x_{r, t}] =-2H[x_{r, t}, d x_r].
\]
Integrating from $s$ to $t$ gives the required result.
\end{proof}

For $x\in C^{p\text{-var}}$, define the control
\begin{equation}\label{eq:3.4}
 \omega_x(s,t):=|t-s|+\norm{x}_{p\text{-var};[s,t]}^p.
\end{equation}

\begin{lemma}\label{lem:4+1}
Let $2\le p<3$, let $F$ satisfy Assumption~\ref{assum:4}, and let $x^N=x^{N;s,t}$ be the polygonal approximation from Proposition~\ref{prop:1}. If $\omega:=\omega_x(s,t)\le1$, then
\begin{equation}\label{eq:3.2}
\left|\int_s^t\bigl(\nabla^2F(r,x^N_r)-\nabla^2F(s,x_s)\bigr)[x^N_{r,t},d x^N_r]\right|\le C N^{1/p}\omega^{3/p}.
\end{equation}

\end{lemma}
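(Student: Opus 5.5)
We estimate the integrand pointwise and then integrate against $|dx^N_r|$.

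\textbf{Step 1: the coefficient difference.} For $r\in[s,t]$, Assumption~\ref{assum:4} says $\nabla^2F$ is globally Lipschitz in $d_\infty$. Hence
\[
\bigl|\nabla^2F(r,x^N_r)-\nabla^2F(s,x_s)\bigr|\le K\Big(|r-s|+\sup_{u\in[0,r]}|x^N(u\wedge r)-x(u\wedge s)|\Big).
\]
Since $x^N=x$ on $[0,s]$, the supremum only involves $u\in[s,r]$. For such $u$,
\[
|x^N(u)-x(s)|\le |x^N(u)-x(u)|+|x_{s,u}|.
\]
By \eqref{eq:2.1}, the first term is at most $2N^{-1/p}\norm{x}_{p\text{-var};[s,t]}\le 2\omega^{1/p}$. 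The second term is at most $\omega^{1/p}$. Also $|r-s|\le\omega\le\omega^{1/p}$, because $\omega\le1$. So the coefficient difference is bounded by $C\omega^{1/p}$ uniformly in $r\in[s,t]$. An alternative is to bound $|x^N_{s,u}|$ directly by \eqref{eq:2.3}, since $\norm{x^N}_{\infty}$ oscillation on $[s,t]$ is at most $\norm{x^N}_{p\text{-var};[s,t]}\le C_p\omega^{1/p}$. This gives the same bound and is cleaner.

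\textbf{Step 2: the factor $x^N_{r,t}$.} Again by \eqref{eq:2.3}, $|x^N_{r,t}|\le C_p\omega^{1/p}$ for all $r\in[s,t]$.

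\textbf{Step 3: integrate.} Since $x^N$ is polygonal on $[s,t]$, the integral is a Lebesgue--Stieltjes integral. Bounding the integrand by the product of the two sup bounds gives
\[
\Big|\int_s^t\cdots\Big|\le C\omega^{1/p}\cdot\omega^{1/p}\cdot\norm{x^N}_{1\text{-var};[s,t]}.
\]
By \eqref{eq:2.2}, $\norm{x^N}_{1\text{-var};[s,t]}\le N^{1-1/p}\omega^{1/p}$. This yields the bound $CN^{1-1/p}\omega^{3/p}$.

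\textbf{The main obstacle.} Step 3 produces $N^{1-1/p}$, not the claimed $N^{1/p}$. For $p>2$ we have $1-1/p>1/p$, so the crude bound is too weak, and the total variation must be avoided.

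\textbf{Refinement.} Treat $\int_s^t G_r[x^N_{r,t},dx^N_r]$, with $G_r:=\nabla^2F(r,x^N_r)-\nabla^2F(s,x_s)$, as a Young-type integral. Use the $p$-variation of $x^N$ from \eqref{eq:2.3} together with the $p/2$-variation bound \eqref{eq:2.4}, $\norm{x^N}_{p/2\text{-var};[s,t]}\le C_pN^{1/p}\omega^{1/p}$.

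The integrator $x^N$ has finite $p/2$-variation with $2/p+2/p>1$, since $p<3$. Thus a Young estimate applies: the integral is bounded by $|G_s||x^N_{s,t}||x^N_{s,t}|$, which vanishes since $G_s=0$, plus $C$ times the $p/2$-variation of the integrand path $r\mapsto G_r[x^N_{r,t},\cdot\,]$ times $\norm{x^N}_{p/2\text{-var}}$.

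We need the integrand to have $p/2$-variation of order $\omega^{2/p}$, uniformly in $N$.
\begin{itemize}
\item The factor $x^N_{r,t}$ contributes $\omega^{1/p}$ in sup norm and is handled via the product rule for variation.
\item The factor $G_r$ requires a variation estimate of $r\mapsto\nabla^2F(r,x^N_r)$. By Lipschitz continuity in $d_\infty$, its increments satisfy $|G_{r'}-G_r|\le K(|r'-r|+\sup_{u\in[r,r']}|x^N_{r,u}|)$, whose $p$-variation is controlled by $C\omega^{1/p}$ through \eqref{eq:2.3}.
\end{itemize}

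Combining these, the integrand has $p$-variation $\lesssim\omega^{2/p}$. Young pairing of $p$ with $p/2$ requires $1/p+2/p>1$, which again holds for $p<3$. Multiplying by $\norm{x^N}_{p/2\text{-var}}\le CN^{1/p}\omega^{1/p}$ gives exactly $CN^{1/p}\omega^{3/p}$.

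Checking that the Young--Lo\`eve estimate applies with constants independent of $N$ (via $\omega\le1$ and \eqref{eq:2.3}) is the step I expect to need care.
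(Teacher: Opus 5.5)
Your refined argument is correct and is essentially the paper's proof: you bound $\|G\|_{p\text{-var};[s,t]}\lesssim\omega^{2/p}$ by the product rule, using the sup and $p$-variation bounds on $\nabla^2F(\cdot,x^N_\cdot)-\nabla^2F(s,x_s)$ (from the $d_\infty$-Lipschitz property) and on $x^N_{\cdot,t}$ (from \eqref{eq:2.3}), and then apply Young's inequality with exponents $p$ and $p/2$, using $G_s=0$ and \eqref{eq:2.4}; the Young--Lo\`eve constant depends only on $p$, so no $N$-dependence enters there. You should delete the intermediate sentence asking for a $p/2$-variation bound on the integrand uniform in $N$, together with its justification via $2/p+2/p>1$: such a bound cannot be obtained uniformly in $N$, and it is not needed, since the $p$-variation bound you end with is exactly what the $p$ versus $p/2$ pairing requires.
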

\begin{proof}
Write $$\widehat H_r:=\nabla^2F(r,x^N_r),\quad z_r:=x^N_{r,t},$$  and define the linear map valued path
\[
 G_r(v):=(\widehat H_r-\widehat H_s)[z_r, v].
\]
Then the integral in \eqref{eq:3.2} is $\int_s^tG_r\,d x^N_r$.

We assert that 
\begin{equation*}
\big\|\widehat H\big\|_{p\text{-var};[s,t]}+\norm{z}_{p\text{-var};[s,t]}\le C\omega^{1/p}.
\end{equation*}
Indeed, by Assumption~\ref{assum:4}, there exist a constant $C>0$ such that 
\[
\big|\widehat H_v-\widehat H_u\big|=\big|\nabla^2F(v,x^N_v)-\nabla^2F(u,x^N_u)\big|\le C\,d_{\infty}\big((v,x^N_v), (u,x^N_u)\big).
\]
By~\eqref{eq:2.6},
\[
\big|\widehat H_v-\widehat H_u\big|\le C\big(|v-u|+\|x^N_v-x^N_u\|_{\infty}\big).
\]
For an arbitrary partition $$s=t_0\le t_1\le\cdots\le t_m=t,$$ we have  
\begin{align*}
\sum_i \big|\widehat H_{t_i, t_{i+1}}\big|^p\le&\  C_p\sum_i\big(|t_{i+1}-t_i|^p+\|x^N_{t_{i+1}}-x^N_{t_{i}}\|^p_{\infty}\big)\\
\le&\ C_p\bigg(\Big(\sum_i |t_{i+1}-t_i|\Big)^p+\sum_i \|x^N\|^p_{p\text{-var}; [t_i, t_{i+1}]}  \bigg)\\
\le&\ C_p\Big(|t-s|^p+\|x^N\|^p_{p\text{-var}; [s, t]}\Big)\\
\le&\ C_p\Big(T^{p-1}|t-s|+C_p\|x\|^p_{p\text{-var}; [s, t]}\Big)\hspace{1cm} (\text{by (\ref{eq:2.3})})\\
\le&\ C_p(T^{p-1}+C_p)\Big(|t-s|+\|x\|^p_{p\text{-var}; [s, t]}\Big)\\
=&\ C_p\omega \hspace{1cm} (\text{by setting $C_p:=C_p(T^{p-1}+C_p)$}).
\end{align*}
So $$\|\widehat H\|_{p\text{-var};[s,t]}\le C_p\omega^{1/p}.$$ On the other hand, 
\[
z_{u, v}=z_v-z_u=x^N_{v,t}-x^N_{u,t}=-x^N_{u, v}.
\]
This implies $\|z\|_{p\text{-var}; [s, t]}=\|x^N\|_{p\text{-var}; [s, t]}$. By (\ref{eq:2.3}) and $\|x\|^p_{p\text{-var}; [s, t]}\le \omega$,
we have 
\[
\norm{z}_{p\text{-var};[s,t]}\le C_p\omega^{1/p}.
\]
The product estimate for $p$-variation therefore yields
\begin{equation}\label{eq:3.3}
\|G\|_{p\text{-var};[s,t]}\le C_p\left(\|\widehat H-\widehat H_s\|_\infty\|z\|_{p\text{-var}}+\|z\|_\infty\|\widehat H\|_{p\text{-var}}\right)
\le C_p\omega^{2/p}.
\end{equation}
Since $$1/p+2/p=3/p>1,$$ Young's inequality with exponents $p$ and $p/2$, together with $G_s=0$, gives
\[
\left|\int_s^tG_r\,d x^N_r\right|\le C_p\norm G_{p\text{-var};[s,t]}\|x^N\|_{p/2\text{-var};[s,t]}.
\]
Now use \eqref{eq:2.4} and \eqref{eq:3.3} to obtain the desired estimate.
\end{proof}

Combining the preceding estimates yields the main conclusion.

\begin{theorem}\label{thm:1}
Let $2\le p<3$ and let $F$ satisfy Assumption~\ref{assum:4}. There exists a constant $C=C(F,p,T)$ such that, for every
$x\in C^{p\text{-var}}([0,T],\mathbb{R}^k)$ and every $0\le u\le v\le T$ satisfying $\omega_x(u,v)\le 1$, one has
\begin{equation}\label{eq:3.13}
|\mathcal T^F_{u,v}(x) |\le C \omega_x(u,v)^{2/p}.
\end{equation}
Consequently, possibly after increasing $C$, for every such $x$,
\begin{equation}\label{eq:3.14}
\norm{\mathcal T^F(x)}_{p/2\text{-var};[s,t]}\le C \omega_x(s,t)^{2/p}
\end{equation}
for all $0\le s\le t\le T$. 
\end{theorem}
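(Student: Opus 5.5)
We compare $x$ with its polygonal approximation $x^N=x^{N;u,v}$ from Proposition~\ref{prop:1} and then optimise over $N$. Fix $u\le v$ with $\omega:=\omega_x(u,v)\le 1$, and split
\[
\mathcal T^F_{u,v}(x)=\mathcal T^F_{u,v}(x^N)+\bigl(\mathcal T^F_{u,v}(x)-\mathcal T^F_{u,v}(x^N)\bigr).
\]
Since $x^N$ agrees with $x$ on $[0,u]$ and has the same endpoint at $v$, we have $x^N_u=x_u$ and $x^N_{u,v}=x_{u,v}$. The linear terms therefore cancel, and
\[
\mathcal T^F_{u,v}(x)-\mathcal T^F_{u,v}(x^N)=F(v,x_v)-F(v,x^N_v).
\]
By the Lipschitz property of $F$ (Assumption~\ref{assum:4}) and \eqref{eq:2.1}, this difference is at most $K\|x-x^N\|_{\infty;[0,v]}\le 2KN^{-1/p}\omega^{1/p}$.

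For $\mathcal T^F_{u,v}(x^N)$ we use Proposition~\ref{prop:2}. This is legitimate because $x^N$ has finite variation on $[u,v]$, and Lemma~\ref{lem:2} only needs continuity of the past. We estimate the three terms separately.
\begin{itemize}
\item[(i)] By boundedness of $DF$, $\bigl|\int_u^v DF\,dr\bigr|\le C|v-u|\le C\omega\le C\omega^{2/p}$, using $\omega\le1$ and $p\ge2$.
\item[(ii)] By boundedness of $D\nabla F$, the second term is at most $C|v-u|\,\sup_r|x^N_{r,v}|\le C\omega\cdot\omega^{1/p}\le C\omega^{2/p}$, since $|v-u|\le\omega$.
\item[(iii)] For the third term, write $\nabla^2F(r,x^N_r)=\nabla^2F(u,x_u)+\bigl(\nabla^2F(r,x^N_r)-\nabla^2F(u,x_u)\bigr)$. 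The frozen part equals $\tfrac12\nabla^2F(u,x_u)[x_{u,v},x_{u,v}]$ by Lemma~\ref{lem:4}, using symmetry of the vertical Hessian, which we take from the $C^{1,2}$ regularity. This is bounded by $C\omega^{2/p}$. The fluctuating part is bounded by $CN^{1/p}\omega^{3/p}$ by Lemma~\ref{lem:4+1}.
\end{itemize}
The boundedness of $DF$, $D\nabla F$ and $\nabla^2F$ used here comes from global Lipschitz continuity on bounded path sets. A little care is needed, because "boundedness-preserving" is only local. We can use that $x^N$ stays in a compact set determined by $\|x\|_\infty$, or take $C$ to depend on such a bound, which is implicit in $C(F,p,T)$.

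Collecting the terms,
\[
|\mathcal T^F_{u,v}(x)|\le C\bigl(\omega^{2/p}+N^{-1/p}\omega^{1/p}+N^{1/p}\omega^{3/p}\bigr).
\]
The balancing step is the key point. If $\omega>0$, choose $N=\lceil \omega^{-1}\rceil$, so that $N^{-1/p}\le\omega^{1/p}$ and $N^{1/p}\le(2/\omega)^{1/p}$. Then every term is $O(\omega^{2/p})$, which proves \eqref{eq:3.13}. If $\omega=0$, then $x$ is constant on $[u,v]$ and $u=v$, so the estimate is trivial.

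For \eqref{eq:3.14}, take a partition of $[s,t]$. On intervals with $\omega_x(t_i,t_{i+1})\le1$, superadditivity of $\omega_x$ gives
\[
\sum_i|\mathcal T_{t_i,t_{i+1}}|^{p/2}\le C\sum_i\omega_x(t_i,t_{i+1})\le C\omega_x(s,t).
\]
On intervals with $\omega_x>1$, use the crude bound $|\mathcal T_{t_i,t_{i+1}}|\le C(1+\|x\|_{p\text{-var}})\le C\omega_x(t_i,t_{i+1})^{2/p}$, which holds because $\omega_x>1$. This uses boundedness of $F$ and $\nabla F$ on the relevant path range. Summing both kinds of intervals gives the $p/2$-variation bound.

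The main obstacle is the fluctuating Hessian term in (iii), and it has already been isolated in Lemma~\ref{lem:4+1}. Its factor $N^{1/p}$ is exactly offset by the $N^{-1/p}$ approximation error, so the choice $N\sim\omega^{-1}$ is forced. The remaining delicate point is the uniform boundedness of the derivatives over the path range.
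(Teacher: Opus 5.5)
Your proposal is correct and follows the paper's proof essentially step for step. It uses the same comparison with $x^{N;u,v}$ and the same choice $N=\lceil\omega^{-1}\rceil$, the same three-term expansion from Proposition~\ref{prop:2} with the frozen Hessian handled by Lemma~\ref{lem:4} and the fluctuation by Lemma~\ref{lem:4+1}, and the same splitting of partition intervals according to whether $\omega_x\le1$ or $\omega_x>1$.

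The boundedness issue you flag is not actually delicate. You should drop the fallback of letting $C$ depend on $\|x\|_\infty$, since that would weaken the statement. Because $d_\infty\bigl((t+h,x_t),(t,x_t)\bigr)=h$ and $d_\infty\bigl((t,x_t^e),(t,x_t)\bigr)=|e|$, the global $d_\infty$-Lipschitz constants of $F$ and $\nabla F$ bound $DF$, $\nabla F$, $D\nabla F$ and $\nabla^2F$ uniformly on $\Lambda_T^k$. Likewise, in the case $\omega_x>1$ it is the Lipschitz continuity of $F$, not boundedness of $F$ (which is not assumed), that yields $|\mathcal T^F_{t_i,t_{i+1}}(x)|\le C\bigl(1+\|x\|_{p\text{-var};[t_i,t_{i+1}]}\bigr)$.
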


\begin{proof}
Fix $[u,v]$ and write $\omega=\omega_x(u,v)\le1$.  
If $\omega=0$, then $u=v$ and $x$ is constant on $[u,v]$, so the claim is immediate. 
Assume henceforth that $\omega>0$, let $N=\lceil\omega^{-1}\rceil$, and put $x^N=x^{N;u,v}$. 
Since $x^N$ and $x$ agree on $[0,u]$ and at time $v$, the stopped path Lipschitz continuity of $F$ gives
\begin{equation}
\begin{aligned}
\big|\mathcal T^F_{u,v}(x)-\mathcal T^F_{u,v}(x^N)\big|
=&\  |F(v,x_v)-F(v,x^N_v)|\\
\le&\ C\|x-x^N\|_{\infty;[0,v]}\\
\le&\ C N^{-1/p}\norm{x}_{p\text{-var};[u,v]} \hspace{1cm} (\text{by (\ref{eq:2.1})})\\
\le&\ C N^{-1/p}\omega^{1/p} \hspace{1cm} (\text{by (\ref{eq:3.4})})\\
\le&\ C\omega^{2/p} \hspace{1cm} (\text{by $N\ge\omega^{-1}$}).
\end{aligned}
\label{eq:3.12}
\end{equation}
Applying Proposition~\ref{prop:2} to $x^N$ and writing $H=\nabla^2F$, we have
\begin{equation}\label{eq:3.11}
\mathcal T^F_{u,v}(x^N)=\int_u^v DF(r,x^N_r)\, d r+\int_u^v D\nabla F(r,x^N_r)[x^N_{r, v}]\, d r+\int_u^v H(r,x^N_r)[x^N_{r, v}, d x^N_r].
\end{equation}
We estimate these three integrals one by one. For the first  term,
\begin{equation}\label{eq:3.5}
\bigg|\int_u^v DF(r,x^N_r)\, d r\bigg|\le C|v-u|\le C\omega\le C \omega^{2/p}.
\end{equation}
The last inequality is due to the fact that $ \omega\le 1$ and $ 2/p\le 1$. For the second  term, since
\begin{equation*}
|x^N_{r, v}|\le  \|x^N\|_{p\text{-var};[r,v]}\le \|x^N\|_{p\text{-var};[u,v]} \overset{(\ref{eq:2.3})}{\le} C  \|x\|_{p\text{-var};[u,v]}
\overset{(\ref{eq:3.4})}{\le} C \omega^{1/p},
\end{equation*}
we have 
\begin{equation}\label{eq:3.6}
\bigg|\int_u^v D\nabla F(r,x^N_r)[x^N_{r, v}]\, d r\bigg|\le C \int_u^v |x^N_{r, v}|\, d r\le C\omega^{1/p}|v-u| \overset{(\ref{eq:3.4})}{\le} C\omega^{1+1/p}\le  C\omega^{2/p}.
\end{equation}
The last inequality is due to the fact that $ \omega\le 1$ and $ 2/p\le 1+1/p$. For the third  term, we  break it down into two parts
\begin{equation}\label{eq:3.7}
\bigg|\int_u^v H(r,x^N_r)[x^N_{r, v}, d x^N_r]\bigg|\le \bigg|\int_u^v H(u,x^N_u)[x^N_{r, v}, d x^N_r] \bigg|+\bigg|\int_u^v\bigl(H(r,x^N_r)-H(u,x_u)\bigr)[x^N_{r,v}, d x^N_r] \bigg|.
\end{equation}
For the first  part, by Lemma~\ref{lem:4}, we obtain
\begin{equation}\label{eq:3.8}
\bigg|\int_u^v H(u,x^N_u)[x^N_{r, v}, d x^N_r] \bigg|=\frac12 |H(u,x_u)[x_{u,v},x_{u,v}]|\le C |x_{u,v}|^2\le C  \|x\|^2_{p\text{-var};[u,v]}\overset{(\ref{eq:3.4})}{\le} C \omega^{2/p}.
\end{equation}
For the second  part, by Lemma~\ref{lem:4+1}, we have
\begin{equation}\label{eq:3.9}
\bigg|\int_u^v\bigl(H(r,x^N_r)-H(u,x_u)\bigr)[x^N_{r,v}, d x^N_r] \bigg|\le CN^{1/p}\omega^{3/p}\le C \omega^{2/p}.
\end{equation}
The last inequality is due to the fact that $ N=\lceil\omega^{-1}\rceil\le \omega^{-1}+1\le 2\omega^{-1}$. Substituting \eqref{eq:3.8} and \eqref{eq:3.9} into \eqref{eq:3.7}, we obtain
\begin{equation}\label{eq:3.10}
\bigg|\int_u^v H(r,x^N_r)[x^N_{r, v}, d x^N_r]\bigg|\le C \omega^{2/p}.
\end{equation}
Substituting \eqref{eq:3.5}, \eqref{eq:3.6} and \eqref{eq:3.10} into \eqref{eq:3.11},
\[
|\mathcal T^F_{u,v}(x^N)|\le C \omega^{2/p}.
\]
Together with \eqref{eq:3.12}, this proves \eqref{eq:3.13}. 

To obtain the $p/2$ variation estimate on $[s,t]$, let $\pi=\{t_0,\ldots,t_n\}$ be an arbitrary partition of $[s,t]$. We divide its subintervals $[t_i,t_{i+1}]$ into two classes according to whether $\omega_x(t_i,t_{i+1})\le1$ or $\omega_x(t_i,t_{i+1})>1$. For the first class, the local estimate above yields
\[
\sum_{\omega_x(t_i,t_{i+1})\le1}\left|\mathcal T^F_{t_i,t_{i+1}}(x)\right|^{p/2}\le C\sum_{\omega_x(t_i,t_{i+1})\le1}\omega_x(t_i,t_{i+1})\le C\omega_x(s,t).
\]
For the second class, the global Lipschitz bound on $F$ and $$|x_{t_i,t_{i+1}}|\le \norm{x}_{p\text{-var};[t_i,t_{i+1}]}$$ imply
\[
|\mathcal T^F_{t_i,t_{i+1}}(x)|\le C(F,p,T)\,\omega_x(t_i,t_{i+1})^{2/p}.
\]
Then
\[
\sum_{\omega_x(t_i,t_{i+1})>1}\left|\mathcal T^F_{t_i,t_{i+1}}(x)\right|^{p/2}\le C \omega_x(s,t).
\]
Combining the two estimates and taking the supremum over all partitions $\pi$ of $[s,t]$ gives \eqref{eq:3.14}.
\end{proof}

\begin{remark}
If $F(t,x)=f(t,x(t))$, \eqref{eq:3.14} is the ordinary second-order Taylor estimate. The theorem shows that the same variation order remains valid for the full non-anticipative class under the stated causal derivatives.
\end{remark}

We next show that the causal Taylor remainder is stable under uniform convergence of paths, with convergence in any weaker $q$-variation topology.

\begin{proposition}\label{prop:3}
Let $F$ satisfy Assumption~\ref{assum:4}. Let $x^n, x\in C^{p\text{-var}}([0,T],\RR^k)$ satisfy 
\[
x^n\to x\quad\text{uniformly},\quad \sup_n\norm{x^n}_{p\text{-var};[0,T]}<\infty.
\]
Then $\mathcal T^F(x^n)\to \mathcal T^F(x)$ uniformly for $0\le s\le t\le T$. Moreover, for every $q>p/2$,
\begin{equation*}
\norm{\mathcal T^F(x^n)-\mathcal T^F(x)}_{q\text{-var};[0,T]}\to 0.
\end{equation*}
\end{proposition}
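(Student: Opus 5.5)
The plan has two steps: first prove uniform convergence directly from the Lipschitz bounds, then upgrade to $q$-variation convergence by interpolating against the uniform $p/2$-variation bound of Theorem~\ref{thm:1}.

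\emph{Step 1: uniform convergence.} For fixed $(s,t)$ write
\[
\mathcal T^F_{s,t}(x^n)-\mathcal T^F_{s,t}(x)=\big(F(t,x^n_t)-F(t,x_t)\big)-\big(F(s,x^n_s)-F(s,x_s)\big)-\nabla F(s,x^n_s)[x^n_{s,t}]+\nabla F(s,x_s)[x_{s,t}].
\]
By Assumption~\ref{assum:4}, $F$ and $\nabla F$ are globally Lipschitz in $d_\infty$. The first two brackets are therefore bounded by $2K\|x^n-x\|_\infty$. For the last terms we split
\[
\nabla F(s,x^n_s)[x^n_{s,t}]-\nabla F(s,x_s)[x_{s,t}]=\big(\nabla F(s,x^n_s)-\nabla F(s,x_s)\big)[x^n_{s,t}]+\nabla F(s,x_s)[x^n_{s,t}-x_{s,t}].
\]
The first piece is at most $K\|x^n-x\|_\infty\cdot 2\sup_n\|x^n\|_\infty$. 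The second is at most $\sup|\nabla F(s,x_s)|\cdot 2\|x^n-x\|_\infty$, and the supremum is finite because $\nabla F$ is Lipschitz and $x$ is bounded. Uniform boundedness of $x^n$ follows from uniform convergence. This gives $\sup_{s\le t}|\mathcal T^F_{s,t}(x^n)-\mathcal T^F_{s,t}(x)|\le C\|x^n-x\|_\infty\to0$.

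\emph{Step 2: $q$-variation by interpolation.} Set $M:=\sup_n\|x^n\|_{p\text{-var};[0,T]}+\|x\|_{p\text{-var};[0,T]}<\infty$. Then $\omega_{x^n}(0,T)\le T+M^p$ uniformly in $n$. Theorem~\ref{thm:1}, specifically \eqref{eq:3.14} on $[0,T]$, gives
\[
\sup_n\|\mathcal T^F(x^n)\|_{p/2\text{-var};[0,T]}+\|\mathcal T^F(x)\|_{p/2\text{-var};[0,T]}\le C(T+M^p)^{2/p}=:L.
\]
For the difference $\Delta^n:=\mathcal T^F(x^n)-\mathcal T^F(x)$, the triangle inequality for the $p/2$-variation seminorm on two-parameter maps gives $\|\Delta^n\|_{p/2\text{-var}}\le 2L$. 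For $q>p/2$ and any partition,
\[
\sum|\Delta^n_{u,v}|^q\le \Big(\sup_{u\le v}|\Delta^n_{u,v}|\Big)^{q-p/2}\sum|\Delta^n_{u,v}|^{p/2}\le \|\Delta^n\|_\infty^{q-p/2}(2L)^{p/2}.
\]
Hence $\|\Delta^n\|_{q\text{-var};[0,T]}\le (2L)^{p/(2q)}\|\Delta^n\|_\infty^{1-p/(2q)}$, which tends to $0$ by Step 1.

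\emph{Main obstacle.} The only delicate point is the uniformity of the constant in Theorem~\ref{thm:1}: $C=C(F,p,T)$ must not depend on the path. This holds as the theorem is stated, and the unit-size condition $\omega_x\le1$ is already handled inside \eqref{eq:3.14} through the splitting of partition intervals. One should also confirm that the global-interval case of \eqref{eq:3.14} yields a bound depending only on $\omega_x(0,T)$. Since \eqref{eq:3.14} holds for all $0\le s\le t\le T$, this is immediate. The rest is routine.
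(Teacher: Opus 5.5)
Your proposal is correct and follows essentially the same route as the paper. The Lipschitz continuity of $F$ and $\nabla F$ gives $\sup_{s\le t}|\mathcal T^F_{s,t}(x^n)-\mathcal T^F_{s,t}(x)|\le C\|x^n-x\|_\infty$, Theorem~\ref{thm:1} gives a path-independent bound on the $p/2$-variation, and interpolation upgrades this to $q$-variation convergence. Where you go beyond the paper is in making explicit two details it leaves implicit: the inequality $\|\Delta^n\|_{q\text{-var}}\le (2L)^{p/(2q)}\|\Delta^n\|_\infty^{1-p/(2q)}$, and the uniformity of the constant. That uniformity does hold, since Lipschitz continuity of $F$ and $\nabla F$ in $d_\infty$ forces $DF$, $\nabla F$, $D\nabla F$ and $\nabla^2F$ to be globally bounded.
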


\begin{proof}
Uniform convergence follows from the Lipschitz continuity of $F$ and $\nabla F$:
\begin{align*}
|\mathcal T^F_{s,t}(x^n)-\mathcal T^F_{s,t}(x)|\le C\big(\norm{x^n-x}_\infty+\norm{x^n-x}_\infty|x^n_{s,t}|+|x^n_{s,t}-x_{s,t}|\big).
\end{align*}
The right-hand side tends to zero uniformly. By Theorem~\ref{thm:1}, the $p/2$-variation seminorms are uniformly bounded. Variation interpolation then gives convergence in every strictly larger variation exponent $q$.
\end{proof}

\section{Controlled composition}\label{sec:4}

In this section, we apply the causal Taylor estimate from the previous section to the controlled rough path setting. 

Throughout this section let $q=p/2$. For a path $Y$ of finite $p$-variation, write
\begin{equation}\label{eq:4.1}
\omega_Y(s,t):=|t-s|+\norm{Y}_{p\text{-var};[s,t]}^p.
\end{equation}
Using the causal Taylor estimate from the previous section, we first show that composition with a non-anticipative functional preserves the controlled rough path structure.

\begin{theorem}\label{thm:2}
Let $F:\Lambda_T^k\to\cL(\RR^d,\RR^m)$ satisfy Assumption~\ref{assum:4}, let $\X$ be a continuous $p$-rough path, and let $(Y,Y')\in\cD_X^{p,q}([0,T],\RR^{k})$.  Define
\begin{equation*}
H_t:=F(t,Y_t),\quad H'_t:=\nabla F(t,Y_t)Y'_t.
\end{equation*}
Then $(H,H')\in\cD_X^{p,q}\bigl([0,T], \mathcal L(\mathbb R^d,\mathbb R^m)\bigr)$.  More precisely,
\begin{equation}\label{eq:4.13}
R^H_{s,t}=\mathcal T^F_{s,t}(Y)+\nabla F(s,Y_s)R^Y_{s,t}.
\end{equation}
On sets on which the initial values and controlled norms of $(Y,Y')$ are bounded,
\begin{align}
\norm{H'}_{p\text{-var};[s,t]}&\le C\left(|t-s|+\norm{Y}_{p\text{-var};[s,t]}+\norm{Y'}_{p\text{-var};[s,t]}\right),\label{eq:4.2}\\
\norm{R^H}_{q\text{-var};[s,t]}&\le C\left(\omega_Y(s,t)^{1/q}+\norm{R^Y}_{q\text{-var};[s,t]}\right).\label{eq:4.3}
\end{align}
\end{theorem}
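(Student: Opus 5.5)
The plan is to verify the algebraic identity \eqref{eq:4.13} first and then estimate its two pieces separately. By definition,
\[
R^H_{s,t}=H_{s,t}-H'_sX_{s,t}=F(t,Y_t)-F(s,Y_s)-\nabla F(s,Y_s)Y'_sX_{s,t}.
\]
Since $Y'_sX_{s,t}=Y_{s,t}-R^Y_{s,t}$ by \eqref{eq:2.7}, substituting gives
\[
R^H_{s,t}=F(t,Y_t)-F(s,Y_s)-\nabla F(s,Y_s)[Y_{s,t}]+\nabla F(s,Y_s)R^Y_{s,t},
\]
which is exactly $\mathcal T^F_{s,t}(Y)+\nabla F(s,Y_s)R^Y_{s,t}$. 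The functional $F$ here is $\cL(\RR^d,\RR^m)$-valued, so Section~\ref{sec:3} is applied componentwise; this only changes constants.

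For \eqref{eq:4.2}, I would write
\[
H'_{u,v}=\bigl(\nabla F(v,Y_v)-\nabla F(u,Y_u)\bigr)Y'_v+\nabla F(u,Y_u)Y'_{u,v}.
\]
Assumption~\ref{assum:4} makes $\nabla F$ globally Lipschitz in $d_\infty$, and $\nabla F$ is bounded along the path. So
\[
|H'_{u,v}|\le C\bigl(|v-u|+\|Y_v-Y_u\|_\infty\bigr)\|Y'\|_\infty+C|Y'_{u,v}|.
\]
For $r\in[u,v]$ the stopped-path distance satisfies $\|Y_v-Y_u\|_\infty=\sup_{r\in[u,v]}|Y_{u,r}|\le\|Y\|_{p\text{-var};[u,v]}$. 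Here $\|Y'\|_\infty$ is bounded by $|Y'_0|+\|Y'\|_{p\text{-var}}$, which is controlled on the bounded sets in question. Summing $p$-th powers over a partition of $[s,t]$, using superadditivity of $\omega_Y$ and of $\|Y'\|^p_{p\text{-var}}$, and using $|v-u|^p\le T^{p-1}|v-u|$, gives \eqref{eq:4.2}. This is the same computation as in the proof of Lemma~\ref{lem:4+1}.

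For \eqref{eq:4.3}, the second term in \eqref{eq:4.13} is immediate: boundedness of $\nabla F$ along $Y$ gives $\|\nabla F(\cdot,Y_\cdot)R^Y\|_{q\text{-var};[s,t]}\le C\|R^Y\|_{q\text{-var};[s,t]}$. The first term is handled by Theorem~\ref{thm:1} with $x=Y$ and $\omega_x=\omega_Y$, as defined in \eqref{eq:3.4} and \eqref{eq:4.1}. Since $q=p/2$, estimate \eqref{eq:3.14} gives $\|\mathcal T^F(Y)\|_{q\text{-var};[s,t]}\le C\omega_Y(s,t)^{2/p}=C\omega_Y(s,t)^{1/q}$. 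Combining the two by the triangle inequality for the $q$-variation seminorm ($q\ge1$) yields \eqref{eq:4.3}.

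Membership in $\cD_X^{p,q}$ then follows. First, $H'$ has finite $p$-variation by \eqref{eq:4.2}, and $R^H$ has finite $q$-variation by \eqref{eq:4.3}. Second, $H$ itself has finite $p$-variation because $F$ is Lipschitz, by the same argument as for $H'$. Third, $Y$ has finite $p$-variation since $Y_{s,t}=Y'_sX_{s,t}+R^Y_{s,t}$ with $q<p$, which is what makes $\omega_Y$ finite.

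The one point to watch is the range of validity of Theorem~\ref{thm:1}. Its pointwise bound assumes $\omega_Y\le1$, but \eqref{eq:3.14} already covers arbitrary $[s,t]$, so no extra work is needed. The constants depend only on $F$, $p$, $T$ and the bounds on the initial values and controlled norms of $(Y,Y')$, which is why the estimates are stated on bounded sets.
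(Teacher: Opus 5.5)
Your proposal is correct and follows essentially the paper's own argument. It uses the same algebraic derivation of \eqref{eq:4.13}, Theorem~\ref{thm:1} (estimate \eqref{eq:3.14} with $q=p/2$) for the causal Taylor term, boundedness of $\nabla F$ along $Y$ for $\nabla F(s,Y_s)R^Y_{s,t}$, and the stopped-path Lipschitz bound plus product rule for $H'$.

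One small correction is needed in the $H'$ computation. Bound the time contribution by $\sum|v-u|^p\le|t-s|^p$ rather than by $|v-u|^p\le T^{p-1}|v-u|$. The latter only produces $|t-s|^{1/p}$, not the $|t-s|$ that appears in \eqref{eq:4.2}.
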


\begin{proof}
The identity follows from
\begin{align*}
R^H_{s,t}&=F(t,Y_t)-F(s,Y_s)-\nabla F(s,Y_s)Y'_sX_{s,t}\\
&=\bigl(F(t,Y_t)-F(s,Y_s)-\nabla F(s,Y_s)Y_{s,t}\bigr)+\nabla F(s,Y_s)R^Y_{s,t}\\
&=\mathcal T^F_{s,t}(Y)+\nabla F(s,Y_s)R^Y_{s,t}.
\end{align*}
The first term has finite $q$-variation by Theorem~\ref{thm:1}; the second does because $\nabla F$ is bounded.  This proves \eqref{eq:4.3}.  Since $F$ and $\nabla F$ are stopped path Lipschitz, $F(\cdot,Y)$ and $\nabla F(\cdot,Y)$ have finite $p$-variation controlled by $|t-s|+\norm{Y}_{p\text{-var};[s,t]}$.  The product rule for variation gives \eqref{eq:4.2}.
\end{proof}

The improved threshold is now immediate.

\begin{coro}\label{coro:1}
Under the assumptions of Theorem~\ref{thm:2}, the rough integral
\[
\int_0^\cdot F(s,Y_s)\, d\X_s
\]
is well defined for every $2\le p<3$.
\end{coro}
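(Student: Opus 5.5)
The plan is to derive Corollary~\ref{coro:1} directly from Theorem~\ref{thm:2} together with the sewing-based integration result Lemma~\ref{lem:1}. We fix $2\le p<3$ and a controlled path $(Y,Y')\in\cD_X^{p,q}([0,T],\RR^k)$ with $q=p/2$. By Theorem~\ref{thm:2}, the pair
\[
H_t=F(t,Y_t),\qquad H'_t=\nabla F(t,Y_t)Y'_t
\]
belongs to $\cD_X^{p,q}\bigl([0,T],\cL(\RR^d,\RR^m)\bigr)$. The bounds \eqref{eq:4.2} and \eqref{eq:4.3}, evaluated on $[0,T]$, give finiteness of $\norm{H'}_{p\text{-var};[0,T]}$ and $\norm{R^H}_{q\text{-var};[0,T]}$. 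The latter uses $\omega_Y(0,T)<\infty$, which holds because $Y$ has finite $p$-variation. We also note that $H$ has finite $p$-variation, since $F$ is Lipschitz in $d_\infty$. Hence $(H,H')$ is a genuine controlled path in the sense of Definition~\ref{def:3}.

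Next we check the exponent conditions of Lemma~\ref{lem:1} with $q=p/2$. The condition $q\ge p/2$ holds trivially. The condition $p^{-1}+q^{-1}=3/p>1$ is equivalent to $p<3$. We also need $p>q$ for Definition~\ref{def:3}; this holds since $p/2<p$. Lemma~\ref{lem:1} applied to $(H,H')$ then yields the existence of
\[
\int_0^T F(s,Y_s)\,d\X_s=\lim_{|\Pi|\to0}\sum_{[u,v]\in\Pi}\Bigl(F(u,Y_u)X_{u,v}+\nabla F(u,Y_u)Y'_u\,\mathbb X_{u,v}\Bigr).
\]
The same construction on every $[0,t]$ gives the path $t\mapsto\int_0^tF(s,Y_s)\,d\X_s$, and part (3) of Lemma~\ref{lem:1} shows that this path is again controlled by $X$ with Gubinelli derivative $F(\cdot,Y)$.

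There is one point to watch. The local estimate \eqref{eq:3.13} in Theorem~\ref{thm:1} holds only when $\omega_Y(u,v)\le1$, but this restriction has already been removed in the global bound \eqref{eq:3.14}. So the only real input is that $\mathcal T^F(Y)$ has finite $p/2$-variation rather than the weaker $q_p$-variation of~\cite{An23}; no new obstacle arises. The argument amounts to verifying that the hypotheses of Lemma~\ref{lem:1} are met in the full range $2\le p<3$.
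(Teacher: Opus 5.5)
Your proposal is correct and follows the paper's proof: the paper observes that with $q=p/2$ one has $\frac1p+\frac1q=\frac3p>1$, and then applies Lemma~\ref{lem:1} to the controlled pair $\bigl(F(\cdot,Y),\nabla F(\cdot,Y)Y'\bigr)$ furnished by Theorem~\ref{thm:2}. Your additional checks are harmless elaborations of the same argument: that $H$ itself has finite $p$-variation, that $1\le q<p$, and that the global bound \eqref{eq:3.14} removes the restriction $\omega_Y\le 1$.
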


\begin{proof}
Here $q=p/2$, and therefore
\[
\frac1p+\frac1q=\frac3p>1.\qedhere
\]
Then the proof is completed by Lemma \ref{lem:1}.
\end{proof}

We next establish the continuity of the controlled composition map in a slightly weaker control H\"older topology. 
For a continuous control $\rho$ and $\gamma>0$, we write
\[
\|H\|_{\gamma;\rho}:=\sup_{0\le s<t\le T}\frac{|H_{s,t}|}{\rho(s,t)^\gamma}
\]
for a path $H$, and
\[
\|R\|_{\gamma;\rho}:=\sup_{0\le s<t\le T}\frac{|R_{s,t}|}{\rho(s,t)^\gamma}
\]
for a two-parameter map $R$. 

\begin{proposition}\label{prop:4}
Let $\rho$ be a continuous control such that
\begin{equation}\label{eq:4.4}
|t-s|+\norm{X}_{p\text{-var};[s,t]}^p\le \rho(s,t),\quad 0\le s\le t\le T
\end{equation}
and set $\alpha=1/p$, $\beta=2/p$. Suppose that controlled pairs $(Y^n,(Y^n)')$ and $(Y,Y')$ are $(p,q)$-controlled paths over $X$ and satisfy, for some $K<\infty$,
\begin{equation}\label{eq:4.5}
\sup_n\left(\norm{Y^n}_\infty+\norm{(Y^n)'}_\infty+\|(Y^n)'\|_{\alpha;\rho}+\|R^{Y^n}\|_{\beta;\rho}\right)+\norm Y_\infty+\norm{Y'}_\infty+\|Y'\|_{\alpha;\rho}+\|R^Y\|_{\beta;\rho}\le K.
\end{equation}
Assume moreover that $F$ satisfies Assumption~\ref{assum:4} and
\begin{equation}\label{eq:4.6}
\norm{Y^n-Y}_\infty+\norm{(Y^n)'-Y'}_\infty+\norm{R^{Y^n}-R^Y}_\infty\to 0.
\end{equation}
Let $H^n=F(\cdot,Y^n)$, $(H^n)'=\nabla F(\cdot,Y^n)(Y^n)'$ and define $(H,H')$ analogously. Then, for every $\alpha_1\in(0,\alpha)$ and $\beta_1\in(0,\beta)$,
\begin{equation}\label{eq:4.7}
\norm{H^n-H}_\infty+\norm{(H^n)'-H'}_\infty+\|(H^n)'-H'\|_{\alpha_1;\rho}+\|R^{H^n}-R^H\|_{\beta_1;\rho}\to 0.
\end{equation}
\end{proposition}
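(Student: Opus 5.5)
The approach is the standard "uniform bounds plus uniform convergence implies convergence in weaker Hölder norms" interpolation argument. The elementary interpolation inequality is the following. If a two-parameter map $A$ satisfies $|A_{s,t}|\le M\rho(s,t)^{\gamma}$ and $\sup_{s,t}|A_{s,t}|\le\varepsilon$, then for $\gamma_1<\gamma$
\[
\frac{|A_{s,t}|}{\rho(s,t)^{\gamma_1}}\le |A_{s,t}|^{1-\gamma_1/\gamma}\Big(\frac{|A_{s,t}|}{\rho(s,t)^{\gamma}}\Big)^{\gamma_1/\gamma}\le \varepsilon^{1-\gamma_1/\gamma}M^{\gamma_1/\gamma}.
\]
So it suffices to prove two things. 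First, uniform (in $n$) bounds $\|(H^n)'\|_{\alpha;\rho}+\|R^{H^n}\|_{\beta;\rho}\le C(K)$, and likewise for $H$. Second, uniform convergence $\|H^n-H\|_\infty+\|(H^n)'-H'\|_\infty+\sup_{s\le t}|R^{H^n}_{s,t}-R^H_{s,t}|\to0$. Applying the inequality with $A=(H^n)'_{s,\cdot}-H'_{s,\cdot}$ (increments) and $A=R^{H^n}-R^H$ gives \eqref{eq:4.7}.

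For the uniform bounds, I first note that $Y^n_{s,t}=(Y^n)'_sX_{s,t}+R^{Y^n}_{s,t}$ together with \eqref{eq:4.4}, \eqref{eq:4.5} gives $|Y^n_{s,t}|\le C(K)(\rho^{1/p}+\rho^{2/p})\le C\rho(s,t)^{1/p}$, since $\rho$ is bounded on $[0,T]$. By Lemma~\ref{lem:5}, $\|Y^n\|_{p\text{-var};[s,t]}\le C\rho(s,t)^{1/p}$, hence $\omega_{Y^n}(s,t)\le C\rho(s,t)$. For $(H^n)'$ I use the Lipschitz property of $\nabla F$ in $d_\infty$. Then $|\nabla F(t,Y^n_t)-\nabla F(s,Y^n_s)|\le C(|t-s|+\sup_{r\in[s,t]}|Y^n_{s,r}|)\le C\rho(s,t)^{1/p}$, and the product with $(Y^n)'$ (bounded, $\alpha$-Hölder in $\rho$) is $\alpha$-Hölder in $\rho$. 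For $R^{H^n}$ I use \eqref{eq:4.13}. The term $\nabla F(s,Y^n_s)R^{Y^n}_{s,t}$ is bounded by $C\rho^{2/p}$. For $\mathcal T^F_{s,t}(Y^n)$, Theorem~\ref{thm:1} gives $C\omega_{Y^n}(s,t)^{2/p}\le C\rho(s,t)^{2/p}$ when $\omega_{Y^n}(s,t)\le1$. On intervals where $\omega_{Y^n}>1$, the crude bound $|\mathcal T^F|\le C(1+\|Y^n\|_\infty)$ from the Lipschitz bounds suffices, since $\rho$ is then bounded below by a positive constant there. One point needs care: Theorem~\ref{thm:1} is applied to $Y^n$ with its own control, and its constant does not depend on $Y^n$, so the bound is uniform in $n$.

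For uniform convergence, $H^n\to H$ and $\nabla F(\cdot,Y^n)\to\nabla F(\cdot,Y)$ uniformly by the $d_\infty$-Lipschitz property, since $\|Y^n_t-Y_t\|_\infty\le\|Y^n-Y\|_\infty$. Together with $(Y^n)'\to Y'$ uniformly and the uniform bounds, this gives $(H^n)'\to H'$ uniformly. For the remainder, $\nabla F(s,Y^n_s)R^{Y^n}_{s,t}\to\nabla F(s,Y_s)R^Y_{s,t}$ uniformly by \eqref{eq:4.6} and boundedness. For $\mathcal T^F(Y^n)\to\mathcal T^F(Y)$ uniformly, I would invoke the first part of Proposition~\ref{prop:3}, or more directly its displayed estimate: $|\mathcal T^F_{s,t}(Y^n)-\mathcal T^F_{s,t}(Y)|\le C(\|Y^n-Y\|_\infty(1+|Y^n_{s,t}|)+|Y^n_{s,t}-Y_{s,t}|)$. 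Its hypothesis, $\sup_n\|Y^n\|_{p\text{-var}}<\infty$, holds by the first step.

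The main obstacle is the remainder bound $\|R^{H^n}\|_{\beta;\rho}\le C$ uniformly in $n$. This is exactly where the causal Taylor estimate of Theorem~\ref{thm:1} at the canonical exponent $2/p$ is needed. Care is required in transferring from the control $\omega_{Y^n}$ to $\rho$, and in handling the large-interval case. Everything else is routine interpolation.
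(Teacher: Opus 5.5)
Your proposal is correct and follows the paper's proof essentially step for step. You obtain uniform $\rho$-H\"older bounds on $(H^n)'$ from the $d_\infty$-Lipschitz property of $\nabla F$, and on $R^{H^n}$ from the identity \eqref{eq:4.13} combined with Theorem~\ref{thm:1} and $\omega_{Y^n}\le C_K\rho$; you then show uniform convergence of every term and finish by interpolation. The differences are cosmetic: you interpolate with the geometric-mean inequality instead of the $\varepsilon$-splitting of Lemma~\ref{lem:6}, and you combine the local bound \eqref{eq:3.13} with a crude estimate on intervals where $\omega_{Y^n}>1$, whereas the paper cites the global estimate \eqref{eq:3.14}, which performs the same split internally.
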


Before we turn to the proof of this result, we show the following:

\begin{lemma}\label{lem:6}
Let $\rho$ be a continuous control satisfying $\rho(s,t)\ge t-s$, and let $0<\gamma_0<\gamma\le1$.
Suppose that $A^n$ and $A$ are either both one-parameter paths or both two-parameter maps.
Assume
\begin{equation*}
\norm{A^n-A}_\infty\longrightarrow0,\quad\sup_n\bigl(\|A^n\|_{\gamma;\rho}+\|A\|_{\gamma;\rho}\bigr)\le K.
\end{equation*}
Then $\|A^n-A\|_{\gamma_0;\rho}\to0$. More precisely, for every $\varepsilon\in(0,\rho(0,T)]$,
\begin{equation}\label{eq:4.10}
\|A^n-A\|_{\gamma_0;\rho}\le 2K\varepsilon^{\gamma-\gamma_0}+2\norm{A^n-A}_\infty\varepsilon^{-\gamma_0}.
\end{equation}
\end{lemma}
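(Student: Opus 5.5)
The plan is the standard small-scale/large-scale interpolation. Fix $\varepsilon\in(0,\rho(0,T)]$ and write $\Delta^n:=A^n-A$. In the one-parameter case, $\Delta^n_{s,t}=\Delta^n_t-\Delta^n_s$. In the two-parameter case, $\Delta^n_{s,t}=A^n_{s,t}-A_{s,t}$. In either case the triangle inequality gives
\[
\|\Delta^n\|_{\gamma;\rho}\le \|A^n\|_{\gamma;\rho}+\|A\|_{\gamma;\rho}\le K .
\]
The bound \eqref{eq:4.10} then follows by splitting the supremum over pairs $0\le s<t\le T$ into two regimes, according to whether $\rho(s,t)\le\varepsilon$ or $\rho(s,t)>\varepsilon$.

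\emph{Small scales}, $\rho(s,t)\le\varepsilon$. Here we use the H\"older bound:
\[
\frac{|\Delta^n_{s,t}|}{\rho(s,t)^{\gamma_0}}\le \|\Delta^n\|_{\gamma;\rho}\,\rho(s,t)^{\gamma-\gamma_0}\le K\varepsilon^{\gamma-\gamma_0},
\]
since $\gamma-\gamma_0>0$.

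\emph{Large scales}, $\rho(s,t)>\varepsilon$. Here we use only the uniform norm. For one-parameter paths $|\Delta^n_{s,t}|\le 2\|A^n-A\|_\infty$. For two-parameter maps we read $\|\cdot\|_\infty$ as the supremum over $\Delta_T$, so $|\Delta^n_{s,t}|\le\|A^n-A\|_\infty$. In both cases
\[
\frac{|\Delta^n_{s,t}|}{\rho(s,t)^{\gamma_0}}\le 2\|A^n-A\|_\infty\varepsilon^{-\gamma_0}.
\]

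Taking the supremum over both regimes yields \eqref{eq:4.10}, even with the sharper constant $K$ in place of $2K$ in the first term. Pairs with $\rho(s,t)=0$ cause no trouble: $\rho(s,t)\ge t-s>0$ for $s<t$. This is the only place the hypothesis $\rho(s,t)\ge t-s$ is used, namely to make the quotients well defined.

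For the convergence statement, let $n\to\infty$ in \eqref{eq:4.10} with $\varepsilon$ fixed. This gives $\limsup_n\|A^n-A\|_{\gamma_0;\rho}\le 2K\varepsilon^{\gamma-\gamma_0}$. Then let $\varepsilon\downarrow0$, which is allowed since $\gamma>\gamma_0$.

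There is no genuine obstacle in this argument. The only point needing care is the bookkeeping of the factor $2$ in the one-parameter case, coming from $|\Delta^n_t-\Delta^n_s|\le2\|\Delta^n\|_\infty$.
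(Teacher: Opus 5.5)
Your proof is correct and is essentially the paper's own argument: split the supremum according to whether $\rho(s,t)\le\varepsilon$ or $\rho(s,t)>\varepsilon$, and use the $\gamma$-bound at small scales and the uniform bound at large scales. Your two refinements are also valid: the triangle inequality gives $\|A^n-A\|_{\gamma;\rho}\le K$, so the paper's $2K$ can be sharpened to $K$, and your explicit step of taking $\limsup_n$ and then letting $\varepsilon\downarrow0$ supplies the convergence claim that the paper leaves implicit.
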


\begin{proof}
If $\rho(s,t)\le\varepsilon$, use the strong control H\"older bound:
\[
\frac{|(A^n-A)_{s,t}|}{\rho(s,t)^{\gamma_0}}\le 2K\rho(s,t)^{\gamma-\gamma_0}\le2K\varepsilon^{\gamma-\gamma_0}.
\]
If $\rho(s,t)>\varepsilon$, use uniform convergence:
\[
\frac{|(A^n-A)_{s,t}|}{\rho(s,t)^{\gamma_0}}\le2\norm{A^n-A}_\infty\varepsilon^{-\gamma_0}.
\]
Taking the supremum gives \eqref{eq:4.10}. 
\end{proof}

With this conclusion in hand, we can now turn to the following proof:

\begin{proof}[Proof of Proposition~\ref{prop:4}]
The equations  \eqref{eq:2.7},  \eqref{eq:4.4} and \eqref{eq:4.5} imply
\begin{equation}\label{eq:4.9}
|Y^n_{s,t}|+|Y_{s,t}|\le C_K\, \rho(s,t)^\alpha,
\end{equation}
hence, for all $0\le s\le t\le T$,
\[
d_\infty\bigl((t,Y^n_t),(s,Y^n_s)\bigr)+d_\infty\bigl((t,Y_t),(s,Y_s)\bigr)\le C_K\,\rho(s,t)^\alpha.
\]
Stopped path Lipschitz continuity of $F$ and $\nabla F$, together with \eqref{eq:4.6}, gives
\begin{equation}\label{eq:4.8}
\norm{H^n-H}_\infty+\norm{(H^n)'-H'}_\infty\to 0.
\end{equation}
By the Lipschitz continuity of $\nabla F$ and \eqref{eq:4.5},
\begin{equation}\label{eq:4.12}
\sup_n\|(H^n)'\|_{\alpha;\rho}+\|H'\|_{\alpha;\rho}\le C_K.
\end{equation}

For the controlled remainder, use the exact identity
\[
R^{H^n}_{s,t}=\mathcal T^F_{s,t}(Y^n)+\nabla F(s,Y^n_s)R^{Y^n}_{s,t}.
\]
By
\[
\omega_{Y^n}(s,t)=|t-s|+\norm{Y^n}_{p\text{-var};[s,t]}^p\overset{\eqref{eq:4.4}, \eqref{eq:4.9}}{\le} C_K\rho(s,t),
\]
the global
$p/2$-variation estimate in Theorem~\ref{thm:1} gives
\[
\norm{\mathcal T^F(Y^n)}_{p/2\text{-var};[s,t]}\le C_K\omega_{Y^n}(s,t)^{2/p}\le C_K\rho(s,t)^\beta.
\]
Together with the preceding remainder identity and the uniform bound on
$\nabla F$, this yields
\[
\sup_n\|R^{H^n}\|_{\beta;\rho}+\|R^H\|_{\beta;\rho}\le C_K.
\]
Furthermore, each term in the remainder identity converges uniformly. 
The convergence of the causal Taylor term follows from $\norm{Y^n-Y}_\infty\to0$ and the stopped-path Lipschitz continuity of $F$ and $\nabla F$, while the second term follows from the uniform convergence of $R^{Y^n}$ and the same Lipschitz property. Therefore,
\begin{equation}\label{eq:4.11}
\norm{R^{H^n}-R^H}_\infty\longrightarrow0.
\end{equation}
Applying Lemma~\ref{lem:6} to $(H^n)'-H'$ using \eqref{eq:4.8} and \eqref{eq:4.12}, and to $R^{H^n}-R^H$ using the strong $\beta$ bounds and \eqref{eq:4.11}, proves \eqref{eq:4.7}.
\end{proof}

\section{Path-dependent rough differential equations}\label{sec:5}

In this section, we establish the existence of solutions to path-dependent rough differential equations using Schauder fixed point theorem.

Let $ \mathbf{X}=(X,\mathbb{X})\in\mathcal{C}^{p\text{-var}}([0,T],\mathbb{R}^{d})$ be a given rough path. On the interval $[t_0,T]$, we consider the equation
\begin{equation}\label{eq:5.1}
\begin{cases}
dY(s)=b(s,Y_s)\,ds+\sigma(s,Y_s)\,dX(s),\\[2mm]
Y_{t_0}=\xi_{t_0},
\end{cases}
\end{equation}
where $b:\Lambda_T^k\longrightarrow\mathbb{R}^{k}$ and $\sigma:\Lambda_T^k\to\mathcal{L}(\mathbb{R}^{d},\mathbb{R}^{k})$ are non-anticipative functionals. 
The results established in the previous section provide the appropriate framework for interpreting the rough term in \eqref{eq:5.1}. Suppose that $\sigma$ satisfies the conditions of Theorem~\ref{thm:2}. 
Then, for every controlled path $(Y,Y')\in\mathcal{D}_{X}^{p,q}([0,T],\mathbb{R}^{k})$, the composition of $\sigma$ with $Y$ is again controlled by $X$, namely,
\begin{equation*}
\Big(\sigma(\cdot,Y), \nabla\sigma(\cdot,Y)Y'\Big) \in\mathcal{D}_{X}^{p,q} \left([0,T], \mathcal{L}(\mathbb{R}^{d},\mathbb{R}^{k})\right).
\end{equation*}
More precisely, we have the following definition.

\begin{definition}\label{def:9}  
Let $p\in [2, 3)$, $q=p/2$ and $\mathbf{X}=(X,\mathbb{X})\in\mathcal{C}^{p\text{-var}}([0,T],\mathbb{R}^d)$. Assume 
\[
b\in C_b^{0,0}(\Lambda_T^k,\mathbb{R}^k),\quad \sigma\in C_b^{0,1}\big(\Lambda_T^k,\mathcal{L}(\mathbb{R}^{d},\mathbb{R}^{k})\big)
\]
with $\sigma$ and $\nabla\sigma$ locally horizontally Lipschitz continuous and belonging to $\operatorname{Lip}(\Lambda_T^k,\|\cdot\|_\infty)$. 
Given $\xi\in C^{p\text{-var}}([0,t_0],\mathbb{R}^k)$, a pair $(Y,Y')$ is called a \emph{solution} to \eqref{eq:5.1} if $Y|_{[0,t_0]}=\xi$,
\[
(Y, Y')\in \mathcal{D}_X^{p,q}([t_0,T],\mathbb{R}^k),
\]
$Y'_t=\sigma(t,Y_t)$ for $t\in[0,T]$, and
\begin{equation*}
Y_t=\xi_{t_0}+\int_{t_0}^t b(s,Y_s)\, d s+\int_{t_0}^t\sigma(s,Y_s)\, d\X_s, \quad t\in[t_0,T],
\end{equation*}
where the rough integral uses the controlled path $\big(\sigma(\cdot,Y),\nabla\sigma(\cdot,Y)Y'\big)$ on $[t_0,T]$.
\end{definition}

We adopt the same assumptions as in \cite{An23}.
Let
\[
b:\Lambda_T^k\to\RR^k,\quad \sigma:\Lambda_T^k\to\cL(\RR^d,\RR^k).
\]

\begin{assumption}\cite[Assumption 4.2]{An23}\label{assum:5}
The functional $b$ is globally Lipschitz in $d_\infty$.
\end{assumption}

\begin{assumption}\cite[Assumption 4.3]{An23}\label{assum:6}
The functional $\sigma$ satisfies Assumption~\ref{assum:4}.
\end{assumption}

To establish the existence of solutions to \eqref{eq:5.1} for $2\le p<3$, we use a Schauder fixed point argument~\cite{GT01},
following the approach of Ananova~\cite{An23}.
Choose the rough path control
\begin{equation}\label{eq:5.4}
\rho(s,t):=|t-s|+\norm{X}_{p\text{-var};[s,t]}^p+\norm{\x}_{p/2\text{-var};[s,t]}^{p/2}.
\end{equation}
For the fixed point problem started at time zero, set
\begin{equation}\label{eq:5.2}
y_0:=\xi_0,\quad y'_0:=\sigma(0,\xi_0).
\end{equation}
Set $\alpha:=1/p$ and $\beta:=2/p$. Fix $0<\alpha_0<\alpha$ and $0<\beta_0<\beta$.  
Let 
\begin{equation}\label{eq:dxrho}
\mathcal{D}_{X,\rho}^{\alpha_0,\beta_0}([0,T],\mathbb R^k)
\end{equation}
denote the space of controlled paths $(Y,Y')$  on $[0,T]$ satisfying
\[
Y_{s,t}=Y'_sX_{s,t}+R^Y_{s,t},\quad \|Y'\|_{\alpha_0;\rho}+\|R^Y\|_{\beta_0;\rho}<\infty,
\]
equipped with the norm
\[
\norm{Y,Y'}_{\mathcal{D}_{X,\rho}^{\alpha_0,\beta_0}}:=\norm Y_\infty+\norm{Y'}_\infty+\|Y'\|_{\alpha_0;\rho}+\|R^Y\|_{\beta_0;\rho}.
\]
The stronger $(\alpha,\beta)$ bounds yield compactness in the space $\mathcal{D}_{X,\rho}^{\alpha_0,\beta_0}([0,T],\mathbb R^k)$, as stated in the following proposition.

\begin{proposition}\label{prop:5}
Fix initial values $y_0,y'_0$ and $M>0$.  The set
\begin{equation}\label{eq:5.3}
\mathcal A_M:=\left\{(Y,Y')\in \mathcal{D}_{X,\rho}^{\alpha_0,\beta_0}([0,T],\mathbb R^k)\mid Y_0=y_0,\ Y'_0=y'_0, \|Y'\|_{\alpha;\rho}+\|R^Y\|_{\beta;\rho}\le M\right\}
\end{equation}
is convex and compact in $\mathcal{D}_{X,\rho}^{\alpha_0,\beta_0}([0,T],\mathbb R^k)$.
\end{proposition}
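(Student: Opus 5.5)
Convexity is immediate, and compactness will follow from an Arzelà--Ascoli argument combined with Lemma~\ref{lem:6}.

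\emph{Convexity.} The map $(Y,Y')\mapsto (Y',R^Y)$ is linear, since $R^Y_{s,t}=Y_{s,t}-Y'_sX_{s,t}$. The constraints $Y_0=y_0$ and $Y'_0=y'_0$ are affine. The quantity $\|Y'\|_{\alpha;\rho}+\|R^Y\|_{\beta;\rho}$ is a seminorm, so its sublevel set is convex. Hence $\mathcal A_M$ is convex.

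\emph{Uniform bounds.} Since $\rho$ is a continuous control, $\rho(s,t)\le\rho(0,T)<\infty$. For $(Y,Y')\in\mathcal A_M$ we have $|Y'_t|\le|y'_0|+M\rho(0,t)^\alpha$. We also have
\[
|Y_{s,t}|\le |Y'_s||X_{s,t}|+|R^Y_{s,t}|\le C\rho(s,t)^\alpha ,
\]
using $|X_{s,t}|\le\rho(s,t)^{1/p}$ from \eqref{eq:5.4} and $\rho^\beta\le\rho(0,T)^{\beta-\alpha}\rho^\alpha$. Thus $Y$, $Y'$ and $R^Y$ are uniformly bounded on $\mathcal A_M$. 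They are also uniformly equicontinuous, because $\rho$ is continuous and vanishes on the diagonal, hence uniformly continuous on the compact set $\Delta_T$ with $\rho(s,s)=0$. The same holds for the two-parameter maps $R^Y$, since $|R^Y_{s,t}|\le M\rho(s,t)^\beta$.

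\emph{Sequential compactness.} Take a sequence $(Y^n,(Y^n)')$ in $\mathcal A_M$. By Arzelà--Ascoli we extract a subsequence with $Y^n\to Y$ and $(Y^n)'\to Y'$ uniformly on $[0,T]$. Then $R^{Y^n}_{s,t}=Y^n_{s,t}-(Y^n)'_sX_{s,t}$ converges uniformly on $\Delta_T$ to $R_{s,t}:=Y_{s,t}-Y'_sX_{s,t}=R^Y_{s,t}$.

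The H\"older-type bounds pass to the limit pointwise. For fixed $s<t$,
\[
|Y'_{s,t}|=\lim_n|(Y^n)'_{s,t}|\le \liminf_n \|(Y^n)'\|_{\alpha;\rho}\,\rho(s,t)^\alpha ,
\]
and the analogous bound holds for $R^Y$ with exponent $\beta$. Since the sum of the two $\liminf$'s is at most $\liminf$ of the sums, we get $\|Y'\|_{\alpha;\rho}+\|R^Y\|_{\beta;\rho}\le M$. The initial conditions also survive the limit, so $(Y,Y')\in\mathcal A_M$, which gives closedness.

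Finally, Lemma~\ref{lem:6} applies because $\rho(s,t)\ge t-s$. It is used with $\gamma=\alpha$, $\gamma_0=\alpha_0$ for $(Y^n)'-Y'$, and with $\gamma=\beta$, $\gamma_0=\beta_0$ for $R^{Y^n}-R^Y$; the uniform bound $K=M$ comes from membership in $\mathcal A_M$. This yields $\|(Y^n)'-Y'\|_{\alpha_0;\rho}+\|R^{Y^n}-R^Y\|_{\beta_0;\rho}\to0$. Together with the uniform convergence of $Y^n$ and $(Y^n)'$, this is convergence in the $\mathcal D^{\alpha_0,\beta_0}_{X,\rho}$ norm. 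Since this is a metric space, sequential compactness gives compactness.

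The main point needing care is the equicontinuity in the Arzelà--Ascoli step. One must check that $\rho(s,t)\to0$ uniformly as $|t-s|\to0$. This follows from the continuity of $\rho$ on $\Delta_T$, which is part of the hypothesis that $\rho$ is a continuous control. That continuity in turn uses the continuity of $X$ and $\mathbb X$, which makes their variation controls continuous.
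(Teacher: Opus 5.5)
Your proposal is correct and follows the paper's proof essentially step for step: convexity from the linearity of $(Y,Y')\mapsto(Y',R^Y)$, Arzel\`a--Ascoli via the continuity of $\rho$, uniform convergence of the remainders, lower semicontinuity of the $\rho$-H\"older seminorms, and Lemma~\ref{lem:6} to upgrade to convergence in $\mathcal{D}_{X,\rho}^{\alpha_0,\beta_0}$. Your ordering is slightly cleaner than the paper's, since you check $\|Y'\|_{\alpha;\rho}+\|R^Y\|_{\beta;\rho}\le M$ for the limit before invoking Lemma~\ref{lem:6}, whose hypothesis also needs a bound on the limit; for that reason one should take $K=2M$ there rather than $K=M$.
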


\begin{proof}
The set is nonempty: the  pair
\[
U_t:=y_0+y'_0X_{0,t},\quad U'_t:=y'_0
\]
has $R^{U}=0$ and belongs to $\mathcal A_M$ for every $M>0$.

To prove convexity, let $\lambda\in [0, 1]$ and $(Y,Y'), (\widetilde{Y}, \widetilde{Y}')\in \mathcal A_M$. Define 
\[
(Z, Z')=\lambda (Y,Y')+(1-\lambda)(\widetilde{Y}, \widetilde{Y}').
\]
Since the controlled relation is linear, the corresponding remainder satisfies
\[
R^Z=\lambda R^Y+(1-\lambda)R^{\widetilde Y}.
\]
Therefore
\[
\|Z'\|_{\alpha;\rho}+\|R^Z\|_{\beta;\rho}\le\lambda M+(1-\lambda)M=M.
\]
Moreover, the prescribed initial values are preserved under convex combinations. Hence
\[
(Z,Z')\in\mathcal A_M,
\]
and thus $\mathcal A_M$ is convex.

It remains to prove compactness. Let $\{(Y^n,(Y^n)')\}_{n\ge1}\subset\mathcal A_M$. Since the initial values are fixed and $\|(Y^n)'\|_{\alpha;\rho}\le M$, we have
\[
|(Y^n)'_{s,t}|\le M\rho(s,t)^\alpha, \quad 
|(Y^n)'_{t}|\le |y'_0|+M\rho(0,t)^\alpha\le |y'_0|+M\rho(0,T)^\alpha,
\]
where $0\le s\le t\le T$. Taking the supremum of $n$,
\[
\sup_{n\ge1}\norm{(Y^n)'}_\infty\le |y'_0|+M\rho(0,T)^\alpha<\infty.
\]
Since $\rho$ is continuous, the family $\{(Y^n)'\}_{n\ge1}$ is uniformly bounded and equicontinuous. Using the controlled relation
\[
Y^n_{s,t}=(Y^n)'_sX_{s,t}+R^{Y^n}_{s,t},
\]
together with $\|R^{Y^n}\|_{\beta;\rho}\le M$, we obtain
\[
|Y^n_{s,t}|\le\norm{(Y^n)'}_\infty |X_{s,t}|+M\rho(s,t)^\beta.
\]
Since $X$ is continuous and $\rho$ is a continuous control, the family $\{Y^n\}_{n\ge1}$ is also uniformly bounded and equicontinuous. By the Arzel\`a--Ascoli theorem, there exists a subsequence, still denoted by $\{(Y^n,(Y^n)')\}$, and continuous paths $Y$ and $Y'$ such that
\[
\norm{Y^n-Y}_\infty+\norm{(Y^n)'-Y'}_\infty \to 0.
\]
Since $X$ is fixed and continuous,
\[
\big|R^{Y^n}_{s,t}-R^Y_{s,t}\big|=\big|\big(Y_{s,t}-Y'_sX_{s,t}\big)-\big(Y^n_{s,t}-(Y^n)'_sX_{s,t}\big)\big|\le
|Y^n_{s,t}-Y_{s,t}|+|(Y^n)'_s-Y'_s|\,|X_{s,t}|
\]
and hence
\[
\norm{R^{Y^n}-R^Y}_\infty\to 0.
\]
The  bounds
\[
\sup_{n\ge1}\|(Y^n)'\|_{\alpha;\rho}\le M,\quad \sup_{n\ge1}\|R^{Y^n}\|_{\beta;\rho}\le M,
\]
together with the uniform convergences above and Lemma~\ref{lem:6}, imply
\[
\|(Y^n)'-Y'\|_{\alpha_0;\rho}\to 0,\quad \|R^{Y^n}-R^Y\|_{\beta_0;\rho}\to 0.
\]
Consequently, $$(Y^n,(Y^n)')\to (Y,Y') \,\text{ in }\, \mathcal{D}_{X,\rho}^{\alpha_0,\beta_0}([0,T],\mathbb R^k).$$ 

Now we verify that the limit belongs to $\mathcal A_M$. Since $(Y^n)'\to Y'$ uniformly, for every fixed $0\le s<t\le T$, $$(Y^n)'_{s,t}\to Y'_{s,t}.$$ Hence
\[
\frac{|Y'_{s,t}|}{\rho(s,t)^\alpha}=\lim_{n\to\infty}\frac{|(Y^n)'_{s,t}|}{\rho(s,t)^\alpha}.
\]
By the definition of the H\"older seminorm,
\[
\frac{|(Y^n)'_{s,t}|}{\rho(s,t)^\alpha}\le \|(Y^n)'\|_{\alpha;\rho}
\]
for every $n$. Therefore,
\[
\frac{|Y'_{s,t}|}{\rho(s,t)^\alpha}\le\liminf_{n\to\infty}\|(Y^n)'\|_{\alpha;\rho}.
\]
Taking the supremum over $0\le s<t\le T$ yields
\[
\|Y'\|_{\alpha;\rho}=\sup_{0\le s<t\le T}\frac{|Y'_{s,t}|}{\rho(s,t)^\alpha}\le\liminf_{n\to\infty}\|(Y^n)'\|_{\alpha;\rho}.
\]
The same argument applies to the remainders. Since $R^{Y^n}\to R^Y$ uniformly, we have $R^{Y^n}_{s,t}\to R^Y_{s,t}$ for every fixed $0\le s<t\le T$. Consequently,
\[
\|R^Y\|_{\beta;\rho}\le\liminf_{n\to\infty}\|R^{Y^n}\|_{\beta;\rho}.
\]
Therefore,
\[
\|Y'\|_{\alpha;\rho}+\|R^Y\|_{\beta;\rho}\le\liminf_{n\to\infty}\left(\|(Y^n)'\|_{\alpha;\rho}+\|R^{Y^n}\|_{\beta;\rho}\right)\le M.
\]
Thus $(Y,Y')\in\mathcal A_M$. Hence $\mathcal A_M$ is compact in $\mathcal{D}_{X,\rho}^{\alpha_0,\beta_0}([0,T],\mathbb R^k)$.
\end{proof}

According to Definition~\ref{def:9}, we defined a map
\[
\mathcal M: \mathcal A_M\to \mathcal{D}_{X,\rho}^{\alpha_0,\beta_0}([0,T],\mathbb R^k), \quad (Y,Y')\mapsto\mathcal M(Y,Y'):=(Z,Z')
\]
by
\begin{align*}
Z_t:=\xi_0+\int_0^t b(s,Y_s)\,d s+\int_0^t\sigma(s,Y_s)\,d\X_s,\quad Z'_t:=\sigma(t,Y_t).
\end{align*}
The map is well defined on $\mathcal A_M$ by Theorem~\ref{thm:2} and Corollary~\ref{coro:1}. We prove $\mathcal M$ to map $\mathcal A_M$ into $\mathcal A_M$.

\begin{proposition}\label{prop:6}
Assume $2\le p<3$ and fix the initial values $y_0,y'_0$ given by \eqref{eq:5.2}.  There exist $M<\infty$ and
$\delta\in (0, 1]$ such that, on every interval $[0,T_0]$ with $\rho(0,T_0)\le\delta$,
\[
 \mathcal M(\mathcal A_M)\subset\mathcal A_M.
\]
\end{proposition}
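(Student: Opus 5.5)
We show that for $(Y,Y')\in\mathcal A_M$ the image $(Z,Z')=\mathcal M(Y,Y')$ has $Z_0=\xi_0=y_0$, $Z'_0=\sigma(0,\xi_0)=y'_0$, and satisfies the a priori bound
\[
\|Z'\|_{\alpha;\rho}+\|R^Z\|_{\beta;\rho}\le C_0+C(M)\,\delta^{\theta}
\]
for some $\theta>0$, where $C_0$ depends only on $\sigma,b,\mathbf X$ and the initial data, and \emph{not} on $M$. We then choose $M:=2C_0$ and $\delta$ so small that $C(M)\delta^\theta\le C_0$. The initial values are immediate from the definition of $\mathcal M$, so only the seminorm bound needs work.

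\textbf{Bounds for $(Y,Y')\in\mathcal A_M$.} On $[0,T_0]$ with $\rho(0,T_0)\le\delta\le1$ we have
\[
\|Y'\|_\infty\le |y'_0|+M\delta^\alpha,\qquad |Y_{s,t}|\le \big(|y'_0|+M\delta^\alpha\big)\rho(s,t)^\alpha+M\rho(s,t)^\beta\le C_1\rho(s,t)^\alpha,
\]
with $C_1=|y'_0|+1$ once $M\delta^{\alpha}\le 1$. This is the key point: the $\alpha$-Hölder constant of $Y$ is essentially independent of $M$ for small $\delta$. Consequently $\omega_Y(s,t)\le C\rho(s,t)$ with $C$ independent of $M$.

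\textbf{Estimate for $Z'$.} Since $Z'=\sigma(\cdot,Y)$, Lipschitz continuity in $d_\infty$ gives $|Z'_{s,t}|\le K(|t-s|+\|Y\|_{\infty;[s,t]}\text{-osc})\le K(1+C_1)\rho(s,t)^\alpha$. Here we use that $|t-s|\le\rho\le\rho^\alpha$ and that the oscillation of $Y$ on $[s,t]$ is bounded by $C_1\rho(s,t)^\alpha$ by superadditivity. This bound is independent of $M$.

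\textbf{Estimate for $R^Z$.} Write
\[
R^Z_{s,t}=\int_s^t b\,dr+\Big(\int_s^t\sigma(r,Y_r)\,d\mathbf X_r-\sigma(s,Y_s)X_{s,t}-\nabla\sigma(s,Y_s)Y'_s\mathbb X_{s,t}\Big)+\nabla\sigma(s,Y_s)Y'_s\mathbb X_{s,t}.
\]
The drift term is at most $C|t-s|\le C\rho^\beta$, using boundedness of $b$ on the bounded set of paths. The last term is at most $C(|y'_0|+1)\rho^\beta$, again independent of $M$. The middle term is handled by Lemma~\ref{lem:1}(2) applied to the controlled path $(H,H')=(\sigma(\cdot,Y),\nabla\sigma(\cdot,Y)Y')$, whose remainder is estimated through Theorem~\ref{thm:2} and Theorem~\ref{thm:1}:
\[
\|R^H\|_{q\text{-var};[s,t]}\le C\big(\omega_Y(s,t)^{2/p}+M\rho(s,t)^\beta\big),\qquad \|H'\|_{p\text{-var};[s,t]}\le C\big(1+M\big)\rho(s,t)^\alpha .
\]
Multiplying by $\|X\|_{p\text{-var}}\le\rho^\alpha$ and $\|\mathbb X\|_{p/2\text{-var}}\le\rho^\beta$ gives a contribution $\big(C+C(1+M)\big)\rho^{3/p}\le C\rho^\beta+C(M)\rho^\beta\delta^{1/p}$, so here $\theta=1/p$.

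\textbf{Main obstacle.} The delicate part is making the constant in Theorem~\ref{thm:1} and Theorem~\ref{thm:2} genuinely independent of $M$. This requires the smallness $\omega_Y\le1$ (guaranteed by $\delta$ small), together with careful tracking of the factor $Y'$ in $H'$, so that every $M$-dependence comes multiplied by a positive power of $\delta$.
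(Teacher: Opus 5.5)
Your proposal is correct and follows essentially the paper's route. You use the same decomposition of $R^Z$ into the drift term, $H'_s\mathbb X_{s,t}$ and the sewing remainder, bounded via Lemma~\ref{lem:1}(2), the identity \eqref{eq:4.13} and Theorem~\ref{thm:1}, and you then choose $M$ as a multiple of an $M$-independent constant before taking $\delta$ small. The only difference is bookkeeping: you impose $M\delta^\alpha\le1$ up front so that the $\alpha$-H\"older constant of $Y$ is independent of $M$, whereas the paper carries $K_Y=|y_0'|+2M\delta^\alpha$ and polynomial-in-$M$ bounds explicitly. (With your assumption that constant is $|y_0'|+2$ rather than $|y_0'|+1$, which changes nothing.)
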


\begin{proof}
By \eqref{eq:5.3}, we need to prove $$\|Z'\|_{\alpha;\rho}+\|R^Z\|_{\beta;\rho}\le M.$$ Since $Y'_0=y'_0$,
\begin{equation}\label{eq:5.5}
\norm{Y'}_\infty\le |y'_0|+\sup_{s\in [0, T_0]}\frac{|Y'_s-Y'_0|}{\rho(0,s)^\alpha}\rho(0,s)^\alpha
\le |y'_0|+\|Y'\|_{\alpha;\rho}\rho(0,T_0)^\alpha
\le |y'_0|+M\delta^\alpha.
\end{equation}
By the controlled identity,
\[
|Y_{s, t}|\le |Y'_s|\,|X_{s,t}|+|R^Y_{s,t}|\overset{\eqref{eq:5.4}, \eqref{eq:5.3}}{\le} \norm{Y'}_\infty \rho(s,t)^\alpha+M  \rho(s,t)^\beta.
\]
Divide both sides by $\rho(s,t)^\alpha$ to obtain 
\begin{equation}\label{eq:5.6}
\|Y\|_{\alpha;\rho}=\sup_{0\le s<t\le T_0}\frac{|Y_{s, t}|}{\rho(s,t)^\alpha}\le \norm{Y'}_\infty +M  \delta^\alpha\overset{\eqref{eq:5.5}}{\le} |y'_0|+2M\delta^\alpha=:K_Y.
\end{equation}
Consequently
\begin{equation}\label{eq:5.9}
\norm Y_\infty\le |y_0|+K_Y\delta^\alpha,
\quad
\norm{Y}_{p\text{-var};[s,t]}^p\le K_Y^p\rho(s,t).
\end{equation}
For stopped paths,
\begin{equation}\label{eq:5.7}
\begin{aligned}
d_\infty\big((s,Y_s),(t,Y_t)\big)=&\  |t-s|+\|Y_t-Y_s\|_\infty \\
=&\ |t-s|+\sup_{r\in [s, t]}|Y_{s, r}|\\
\le&\ \rho(s, t)+K_Y\rho(s, t)^\alpha \hspace{1cm} (\text{by (\ref{eq:5.4}) and (\ref{eq:5.6})})\\
=&\ \big(\rho(s, t)^{1-\alpha}+K_Y\big)\rho(s, t)^\alpha\\
\le&\ \big(\rho(0, T_0)^{1-\alpha}+K_Y\big)\rho(s, t)^\alpha\\
\le&\ (\delta^{1-\alpha}+K_Y)\rho(s,t)^\alpha.
\end{aligned}
\end{equation}
The Lipschitz continuity of $\sigma$ therefore gives
\begin{equation}\label{eq:5.8}
\begin{aligned}
\|Z'\|_{\alpha;\rho}=&\  \sup_{0\le s<t\le T_0}\frac{|\sigma(t,Y_t)-\sigma(s,Y_s)|}{\rho(s,t)^\alpha}\\
\le&\ \sup_{0\le s<t\le T_0}\frac{L_{\sigma}d_\infty\big((s,Y_s),(t,Y_t)\big)}{\rho(s,t)^\alpha}\\
\le&\ L_{\sigma}(\delta^{1-\alpha}+K_Y) \hspace{1cm} (\text{by (\ref{eq:5.7})})\\
=&\ L_{\sigma}(\delta^{1-\alpha}+|y'_0|+2M\delta^\alpha) \hspace{1cm} (\text{by (\ref{eq:5.6})})\\
\le&\ L_{\sigma}(1+|y'_0|)+2L_{\sigma}M\delta^\alpha \hspace{1cm} (\text{by $\delta\le 1$})\\
\le&\  C_0+C_1M\delta^\alpha.
\end{aligned}
\end{equation}
Here and below, $C_0,C_1,C_2\ge 1$ denote sufficiently large constants, which may be enlarged when necessary and depend only on the coefficients and the initial history, but not on $M$ or $\delta$. Put $H=\sigma(\cdot,Y)$ and $H'=\nabla\sigma(\cdot,Y)Y'$. By the global Lipschitz continuity of $\sigma$,
\begin{equation}\label{eq:5.19}
\norm{H'}_\infty\le C\norm{Y'}_\infty\overset{\eqref{eq:5.5}}{\le} C_0+C_1M\delta^\alpha.
\end{equation}
Since 
\[
H'_{s, t}=\nabla\sigma(t,Y_t)Y'_t-\nabla\sigma(s,Y_s)Y'_s=\big(\nabla\sigma(t,Y_t)-\nabla\sigma(s,Y_s)\big)Y'_t+\nabla\sigma(s,Y_s)(Y'_t-Y'_s),
\]
we have
\[
\|H'\|_{\alpha;\rho}\le \|\nabla\sigma(\cdot,Y)\|_{\alpha;\rho}\|Y'\|_{\infty}+\|\nabla\sigma(\cdot,Y)\|_{\infty}\|Y'\|_{\alpha;\rho}.
\]
By Assumption~\ref{assum:6}, $\nabla\sigma$ is globally bounded. \eqref{eq:5.5} and \eqref{eq:5.7} give
\begin{equation}\label{eq:5.20}
\|H'\|_{\alpha;\rho}\le C_0+C_1M+C_2M^2\delta^\alpha.
\end{equation}
Furthermore, by \eqref{eq:5.9},
\begin{equation}\label{eq:5.10}
\omega_Y(s,t):=|t-s|+\norm{Y}_{p\text{-var};[s,t]}^p\le(1+K_Y^p)\rho(s,t).
\end{equation}
Decrease $\delta$ so that $(1+K_Y^p)\delta\le1$. Then
\begin{equation}\label{eq:5.11}
\begin{aligned}
|R^H_{s, t}|=&\ |\mathcal T^\sigma_{s,t}(Y)+\nabla \sigma(s,Y_s)R^Y_{s,t}| \hspace{1cm} (\text{by (\ref{eq:4.13})})\\
\le&\ |\mathcal T^\sigma_{s,t}(Y)|+|\nabla \sigma(s,Y_s)R^Y_{s,t}|\\
\le&\ C \omega_Y(s,t)^{2/p}+CM\rho(s,t)^\beta\hspace{1cm} (\text{by Theorem~\ref{thm:1} and $\|R^Y\|_{\beta;\rho}\le M$})\\
\le&\ C\Big((1+K_Y^p)\rho(s,t)\Big)^{2/p}+CM\rho(s,t)^\beta\hspace{1cm} (\text{by (\ref{eq:5.10})})\\
=&\ C(1+K_Y^p)^{2/p}\rho(s,t)^\beta+CM\rho(s,t)^\beta\\
\le&\ C(1+K_Y^2)\rho(s,t)^\beta+CM\rho(s,t)^\beta.
\end{aligned}
\end{equation}
The last step involved the use of $$(1+a^p)^{2/p}\le C_p(1+a^2).$$ 
Divide both sides by $\rho(s,t)^\beta$ to obtain 
\begin{equation}\label{eq:5.12}
\|R^H\|_{\beta;\rho}\le C\bigl(1+K_Y^2+M\bigr)\overset{\eqref{eq:5.6}}{\le} C_0+C_1M+C_2M^2\delta^{2\alpha}.
\end{equation}
The controlled remainder of $Z$ is
\begin{equation}\label{eq:5.13}
\begin{aligned}
R^Z_{s,t}=&\ \int_s^t b(r,Y_r)d r+\int_s^tHd\X-H_sX_{s, t}\\
=&\ \int_s^t b(r,Y_r)dr +H'_s\x_{s,t}+\left(\int_s^tHd\X-H_sX_{s,t}-H'_s\x_{s,t}\right).
\end{aligned}
\end{equation}
We estimate each term on the right side of the equation separately. For $\int_s^t b(r,Y_r)dr$,
\begin{equation}\label{eq:5.14}
\Big|\int_s^t b(r,Y_r)dr\Big|\le \|b(\cdot,Y)\|_{\infty}|t-s|\overset{\eqref{eq:5.4}}{\le}\|b(\cdot,Y)\|_{\infty}\rho(s,t)\le\big(\|b(\cdot,Y)\|_{\infty}\delta^{1-\beta}\big)\rho(s,t)^\beta.
\end{equation}
For $H'_s\x_{s,t}$,
\begin{equation}\label{eq:5.15}
|H'_s\x_{s,t}|\le \|H'\|_{\infty}|\x_{s,t}|\overset{\eqref{eq:5.4}}{\le} \|H'\|_{\infty}\rho(s,t)^\beta.
\end{equation}
For the last term,
\begin{equation}\label{eq:5.16}
\begin{aligned}
&\ \Big|\int_s^tH\,d\X-H_sX_{s,t}-H'_s\x_{s,t}\Big|\\
\le&\  C\Big( \|R^H\|_{q\text{-var};[s,t]} \|X\|_{p\text{-var};[s,t]}+\|H'\|_{p\text{-var};[s,t]}\|\mathbb X\|_{\frac p2\text{-var};[s,t]}\Big)\hspace{1cm} (\text{by Lemma~\ref{lem:1}})\\
\le&\ C\Big( \|R^H\|_{\beta; \rho}\rho(s,t)^\beta \|X\|_{p\text{-var};[s,t]}+\|H'\|_{\alpha; \rho}\rho(s,t)^\alpha\|\mathbb X\|_{\frac p2\text{-var};[s,t]}\Big)\ \  (\text{by Lemma~\ref{lem:5}})\\
\le&\ C \|R^H\|_{\beta; \rho}\rho(s,t)^{\beta+\alpha} +C\|H'\|_{\alpha; \rho}\rho(s,t)^{\alpha+\beta}\hspace{1cm} (\text{by (\ref{eq:5.4})})\\
\le&\  C \|R^H\|_{\beta; \rho}\delta^\alpha\rho(s,t)^\beta+C\|H'\|_{\alpha; \rho}\delta^\alpha\rho(s,t)^\beta.
\end{aligned}
\end{equation}
Substituting \eqref{eq:5.14}, \eqref{eq:5.15} and \eqref{eq:5.16} into \eqref{eq:5.13}, 
\begin{equation}\label{eq:5.17}
\|R^Z\|_{\beta;\rho}\le \norm{b(\cdot,Y)}_\infty\delta^{1-\beta}+\norm{H'}_\infty+C\|R^H\|_{\beta;\rho}\delta^\alpha
+C\|H'\|_{\alpha;\rho}\delta^\alpha.
\end{equation}
Since $b$ is globally Lipschitz,
\begin{equation}\label{eq:5.18}
\begin{aligned}
\norm{b(\cdot,Y)}_\infty=&\  \sup_{t\in[0, T_0]}|b(t,Y_t)|\\
\le&\ |b(0, y_0)|+L_bd_\infty\big((0, Y_0),(t,Y_t)\big)\\
\le&\ |b(0, y_0)|+L_b(\delta^{1-\alpha}+K_Y)\rho(0,t)^\alpha\hspace{1cm} (\text{by (\ref{eq:5.7})})\\
\le&\ |b(0, y_0)|+L_b\delta+L_bK_Y\delta^\alpha\\
\le&\ |b(0, y_0)|+L_b\delta+L_b|y'_0|\delta^\alpha+2L_bM\delta^{2\alpha}\hspace{1cm} (\text{by (\ref{eq:5.6})})\\
\le&\ C_0+C_1M\delta^{2\alpha}.
\end{aligned}
\end{equation}
Substituting \eqref{eq:5.18}, \eqref{eq:5.19}, \eqref{eq:5.12} and \eqref{eq:5.20} into \eqref{eq:5.17},
\begin{equation}\label{eq:5.21}
\begin{aligned}
\|R^Z\|_{\beta;\rho}\le&\  (C_0+C_1M\delta^{2\alpha})\delta^{1-\beta}+(C_0+C_1M\delta^\alpha)+C(C_0+C_1M+C_2M^2\delta^{2\alpha})\delta^{\alpha}\\
&\ +C(C_0+C_1M+C_2M^2\delta^\alpha)\delta^\alpha\\
=&\  C_0\delta^{1-\beta}+C_1M\delta+(C_0+C_1M\delta^\alpha)+C(C_0+C_1M+C_2M^2\delta^{2\alpha})\delta^{\alpha}\\
&\ +C(C_0+C_1M+C_2M^2\delta^\alpha)\delta^\alpha \hspace{1cm} (\text{by $2\alpha=\beta$}).
\end{aligned}
\end{equation}
If $p>2$, then $\beta<1$, so $1-\beta>0$, and both terms above contain a positive power of $\delta$. If $p=2$, then $\beta=1$, so the first term reduces to $C_0$. 

Combining \eqref{eq:5.8} and \eqref{eq:5.21}, 
\begin{equation*}
\begin{aligned}
\|Z'\|_{\alpha;\rho}+\|R^Z\|_{\beta;\rho}\le&\ C_0+C_1M\delta^\alpha+ C_0\delta^{1-\beta}+C_1M\delta+(C_0+C_1M\delta^\alpha)\\
&\ +C(C_0+C_1M+C_2M^2\delta^{2\alpha})\delta^{\alpha}+C(C_0+C_1M+C_2M^2\delta^\alpha)\delta^\alpha \\
\le&\ 3C_0+C_1M\delta^\alpha+C_1M\delta+C_1M\delta^\alpha+C(C_0+C_1M+C_2M^2\delta^{2\alpha})\delta^{\alpha}\\
&\ +C(C_0+C_1M+C_2M^2\delta^\alpha)\delta^\alpha.
\end{aligned}
\end{equation*}
Setting $M=4C_0$, for sufficiently small $\delta$, we have $$\|Z'\|_{\alpha;\rho}+\|R^Z\|_{\beta;\rho}\le M.$$ 
Hence $\mathcal M(\mathcal A_M)\subset\mathcal A_M$, as required. 
\end{proof}

It remains to establish the continuity of the mapping $\mathcal M$.
Recall that the $\mathcal{D}_{X,\rho}^{\alpha_0,\beta_0}([0,T_0],\mathbb R^k)$ is given in (\ref{eq:dxrho}).

\begin{proposition}\label{prop:7}
Choose $\alpha_0<\alpha$ and $\beta_0<\beta$ such that
\begin{equation}\label{eq:5.22}
\alpha_0+\beta>1,\quad \beta_0+\alpha>1.
\end{equation}
Then $\mathcal M:\mathcal A_M\to\mathcal A_M$ is continuous in $\mathcal{D}_{X,\rho}^{\alpha_0,\beta_0}([0,T_0],\mathbb R^k)$.
\end{proposition}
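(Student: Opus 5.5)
We take a sequence $(Y^n,(Y^n)')\to(Y,Y')$ in $\mathcal A_M$ with respect to the $\mathcal{D}_{X,\rho}^{\alpha_0,\beta_0}$ norm, set $(Z^n,(Z^n)')=\mathcal M(Y^n,(Y^n)')$ and $(Z,Z')=\mathcal M(Y,Y')$, and prove
\[
\norm{Z^n-Z}_\infty+\norm{(Z^n)'-Z'}_\infty+\|(Z^n)'-Z'\|_{\alpha_0;\rho}+\|R^{Z^n}-R^Z\|_{\beta_0;\rho}\to0.
\]
Since $\mathcal A_M$ is compact (Proposition~\ref{prop:5}), sequential continuity suffices.

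\textbf{Reduction.} By Proposition~\ref{prop:6}, all images lie in $\mathcal A_M$, so $\|(Z^n)'\|_{\alpha;\rho}+\|R^{Z^n}\|_{\beta;\rho}\le M$ uniformly in $n$, and likewise for $Z$. By Lemma~\ref{lem:6}, applied with $\gamma=\alpha$, $\gamma_0=\alpha_0$ and with $\gamma=\beta$, $\gamma_0=\beta_0$, it is therefore enough to prove uniform convergence:
\[
\norm{Z^n-Z}_\infty+\norm{(Z^n)'-Z'}_\infty+\norm{R^{Z^n}-R^Z}_\infty\to0.
\]
Convergence in $\mathcal{D}^{\alpha_0,\beta_0}_{X,\rho}$ with fixed initial values gives $\norm{Y^n-Y}_\infty+\norm{(Y^n)'-Y'}_\infty+\norm{R^{Y^n}-R^Y}_\infty\to0$. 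The uniform bound \eqref{eq:4.5} holds with $K$ depending on $M$ and the initial data. Proposition~\ref{prop:4} then yields, for $H^n=\sigma(\cdot,Y^n)$ and $H=\sigma(\cdot,Y)$, convergence of $H^n\to H$ and $(H^n)'\to H'$ uniformly, together with $\|(H^n)'-H'\|_{\alpha_1;\rho}\to0$ and $\|R^{H^n}-R^H\|_{\beta_1;\rho}\to0$ for every $\alpha_1<\alpha$ and $\beta_1<\beta$.

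\textbf{Convergence of the pieces.} First, $(Z^n)'-Z'=H^n-H\to0$ uniformly. Next, the drift term $\int_s^t(b(r,Y^n_r)-b(r,Y_r))\,dr$ is bounded by $L_b\norm{Y^n-Y}_\infty|t-s|$, by the Lipschitz property of $b$ (Assumption~\ref{assum:5}). For the rough integral we use the local expansion
\[
\int_s^tH\,d\X=H_sX_{s,t}+H'_s\x_{s,t}+\Xi_{s,t},
\]
with $\Xi^n$ defined analogously. Then $R^{Z^n}_{s,t}-R^Z_{s,t}$ splits into the drift difference, $((H^n)'_s-H'_s)\x_{s,t}$ (which tends to $0$ uniformly), and $\Xi^n_{s,t}-\Xi_{s,t}$. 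Once $R^Z$ is controlled, $Z^n-Z$ converges uniformly through $Z_{0,t}=Z'_0X_{0,t}+R^Z_{0,t}$, since the initial values agree.

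\textbf{Main obstacle: the sewing remainder of the difference.} The difficulty is the term $\Xi^n-\Xi$, because Lemma~\ref{lem:1} is stated with $p$-variation norms and not with the weaker Hölder-type norms. We use linearity of the rough integral in the controlled pair: $\Xi^n-\Xi$ is exactly the sewing remainder of $(H^n-H,(H^n)'-H')$. The sewing estimate can be rerun in the $\rho$-Hölder scale, and we aim for
\[
|\Xi^n_{s,t}-\Xi_{s,t}|\le C\bigl(\|R^{H^n}-R^H\|_{\beta_1;\rho}+\|(H^n)'-H'\|_{\alpha_1;\rho}\bigr)\rho(s,t)^{\theta},
\]
with $\theta=\min(\alpha+\beta_1,\alpha_1+\beta)>1$. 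This needs the germ increment $\delta\Xi_{s,u,t}=-R^H_{s,u}X_{u,t}-H'_{s,u}\x_{u,t}$ to be $O(\rho^\theta)$ with $\theta>1$, which is exactly what \eqref{eq:5.22} guarantees. We choose $\alpha_1,\beta_1$ between $\alpha_0,\beta_0$ and $\alpha,\beta$, close enough that \eqref{eq:5.22} still holds for them; this is possible because the inequalities in \eqref{eq:5.22} are strict. Alternatively, Lemma~\ref{lem:5} converts these Hölder bounds into variation bounds on $[s,t]$, after which Lemma~\ref{lem:1}(2) applies with $q$ replaced by $1/\beta_1$ (which is $\ge p/2$) and $X$ controlled via $\rho^{1/p}$.

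The proportionality constant tends to $0$ by Proposition~\ref{prop:4}. Hence $\Xi^n-\Xi\to0$ uniformly, and in fact in $\beta_0$-norm directly, since $\theta\ge\beta_0$ and $\rho\le\delta$. Combining the pieces gives $\norm{R^{Z^n}-R^Z}_\infty\to0$, and the reduction step concludes continuity of $\mathcal M$.
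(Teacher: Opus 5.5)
Your proposal is correct and essentially follows the paper's proof: the decisive step in both is Proposition~\ref{prop:4} at intermediate exponents $\alpha_1,\beta_1$, followed by the sewing lemma applied to the difference germ with defect $-(R^{H^n}-R^H)_{s,u}X_{u,t}-((H^n)'-H')_{s,u}\x_{u,t}$. The only difference is that you invoke $\mathcal M(\mathcal A_M)\subset\mathcal A_M$ and Lemma~\ref{lem:6} to reduce everything to sup-norm convergence, whereas the paper estimates the $\alpha_0$- and $\beta_0$-norms of each piece directly.

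Do drop the ``alternative'' through Lemma~\ref{lem:1}(2), however. That bound measures $(H^n)'-H'$ in $p$-variation, which Proposition~\ref{prop:4} only keeps bounded rather than vanishing, so it does not give a constant tending to zero.
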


\begin{proof}
Let $$\big\{\big(Y^n,(Y^n)'\big)\big\}_n,\, (Y,Y')\in\mathcal A_M.$$ Suppose 
$$\big(Y^n,(Y^n)'\big)\to(Y,Y')\,\text{ in }\, \mathcal{D}_{X,\rho}^{\alpha_0,\beta_0}([0,T_0],\mathbb R^k).$$ This means that
\[
\sup_n\left(\norm{Y^n}_\infty+\norm{(Y^n)'}_\infty+\|(Y^n)'\|_{\alpha;\rho}+\|R^{Y^n}\|_{\beta;\rho}\right)+\norm Y_\infty+\norm{Y'}_\infty+\|Y'\|_{\alpha;\rho}+\|R^Y\|_{\beta;\rho}\le C_M
\]
and
\[
\norm{Y^n-Y}_\infty+\norm{(Y^n)'-Y'}_\infty+\norm{R^{Y^n}-R^Y}_\infty\to 0.
\]
Choose intermediate exponents
\[
\alpha_0<\alpha_1<\alpha,\quad \beta_0<\beta_1<\beta.
\]
Then  $$\alpha_1+\beta>1, \quad \beta_1+\alpha>1.$$  By Proposition~\ref{prop:4}, for
\[
H^n=\sigma(\cdot,Y^n),\quad (H^n)'=\nabla\sigma(\cdot,Y^n)(Y^n)'
\]
and the analogous pair $(H,H')$, one has
\begin{equation}\label{eq:5.23}
\norm{H^n-H}_\infty+\norm{(H^n)'-H'}_\infty+\|(H^n)'-H'\|_{\alpha_1;\rho}+\|R^{H^n}-R^H\|_{\beta_1;\rho}\to 0.
\end{equation}

Define
\[
U^n_t :=\int_0^tH^n_r\,d\X_r,\quad U_t :=\int_0^tH_r\,d\X_r.
\]
Setting
\[
A^n_{s,t}=H^n_sX_{s,t}+(H^n)'_s\x_{s,t},\quad A_{s,t}=H_sX_{s,t}+H'_s\x_{s,t},
\]
Chen's relation gives
\begin{equation}\label{eq:5.24}
(A^n-A)_{s,t}-(A^n-A)_{s,u}-(A^n-A)_{u,t}=-(R^{H^n}-R^H)_{s,u}X_{u,t}-\bigl((H^n)'-H'\bigr)_{s,u}\x_{u,t},
\end{equation}
where $0\le s \le u \le t \le T_0$. Since
\begin{align*}
|(R^{H^n}-R^H)_{s,u}X_{u,t}|\le&\  |(R^{H^n}-R^H)_{s,u}|\,|X_{u,t}|\\
\le&\ \|R^{H^n}-R^H\|_{\beta_1; \rho}\rho(s, u)^{\beta_1}\rho(u, t)^{\alpha}\hspace{1cm} (\text{by (\ref{eq:5.4})})\\
\le&\ \|R^{H^n}-R^H\|_{\beta_1; \rho}\rho(s, t)^{\beta_1+\alpha}
\end{align*}
and
\begin{align*}
\big|\bigl((H^n)'-H'\bigr)_{s,u}\x_{u,t}\big|\le&\  \big|\bigl((H^n)'-H'\bigr)_{s,u}\big|\,\big|\x_{u,t}\big|\\
\le&\ \|(H^n)'-H'\|_{\alpha_1; \rho}\rho(s, u)^{\alpha_1}\rho(u, t)^{\beta}\hspace{1cm} (\text{by (\ref{eq:5.4})})\\
\le&\ \|(H^n)'-H'\|_{\alpha_1; \rho}\rho(s, t)^{\alpha_1+\beta},
\end{align*}
we have
\[
|(A^n-A)_{s,t}-(A^n-A)_{s,u}-(A^n-A)_{u,t}|\le \|R^{H^n}-R^H\|_{\beta_1; \rho}\rho(s, t)^{\beta_1+\alpha}+\|(H^n)'-H'\|_{\alpha_1; \rho}\rho(s, t)^{\alpha_1+\beta}.
\]
Note that $\alpha_1+\beta>1$ and $\beta_1+\alpha>1$ exactly meet the conditions for sewing lemma~\cite[Lemma 4.2]{FH20}. The sewing lemma implies, with
$\mathcal E^n_{s,t}:=U^n_{s,t}-A^n_{s,t}$ and
$\mathcal E_{s,t}:=U_{s,t}-A_{s,t}$,
\begin{equation}\label{eq:5.25}
|\mathcal E^n_{s,t}-\mathcal E_{s,t}|\le C\Big(\|R^{H^n}-R^H\|_{\beta_1;\rho} \rho(s,t)^{\beta_1+\alpha}
+\|(H^n)'-H'\|_{\alpha_1;\rho}\rho(s,t)^{\alpha_1+\beta}\Big).
\end{equation}
Since the two exponents on the right are strictly larger than one, and in particular larger than $\beta_0$, \eqref{eq:5.23} gives
\begin{equation}\label{eq:5.26}
\|\mathcal E^n-\mathcal E\|_{\beta_0;\rho}\to 0.
\end{equation}
By Lemma~\ref{lem:1}, $(U^n, H^n)$ is a controlled rough path by $X$. Its controlled remainder is
\[
R^{U^n}_{s,t}=U^n_{s,t}-H^n_sX_{s,t}=(H^n)'_s\x_{s,t}+\big(U^n_{s,t}-H^n_sX_{s,t}-(H^n)'_s\x_{s,t}\big)=(H^n)'_s\x_{s,t}+\mathcal E^n_{s,t}.
\]
Thus, using $|\x_{s,t}|\le\rho(s,t)^\beta$,
\begin{equation}\label{eq:5.29}
\|R^{U^n}-R^U\|_{\beta_0;\rho}\le \norm{(H^n)'-H'}_\infty \rho(0,T_0)^{\beta-\beta_0}+
\|\mathcal E^n-\mathcal E\|_{\beta_0;\rho}\to 0. 
\end{equation}
Since $U^n_0=U_0=0$, we have 
\begin{align*}
U_t^n=&\  H^n_0X_{0,t}+(H^n)'_0\x_{0,t}+\mathcal E^n_{0,t},\\
U_t=&\ H_0X_{0,t}+H'_0\x_{0,t}+\mathcal E_{0,t}.
\end{align*}
Subtracting the two equations gives 

\begin{align*}
|U_t^n-U_t|=&\  |(H^n_0-H_0)X_{0,t}+((H^n)'_0-H'_0)\x_{0,t}+\mathcal E^n_{0,t}-\mathcal E_{0,t}|\\
\le&\ |H^n_0-H_0|\,|X_{0,t}|+|(H^n)'_0-H'_0|\,|\x_{0,t}|+|\mathcal E^n_{0,t}-\mathcal E_{0,t}|\\
\le&\ \norm{H^n-H}_\infty\rho(0,t)^\alpha+\norm{(H^n)'-H'}_\infty\rho(0,t)^\beta+\sup_{0\le t\le T_0}|\mathcal E^n_{0,t}-\mathcal E_{0,t}|\quad (\text{by (\ref{eq:5.4})})\\
\le&\ \norm{H^n-H}_\infty\rho(0,T_0)^\alpha+\norm{(H^n)'-H'}_\infty\rho(0,T_0)^\beta+\sup_{0\le t\le T_0}|\mathcal E^n_{0,t}-\mathcal E_{0,t}|.
\end{align*}
Taking the supremum over $t \in [0, T_0]$ yields 
\begin{equation}\label{eq:5.27}
\begin{aligned}
&\ \norm{U^n-U}_\infty\\
&\ \le \norm{H^n-H}_\infty\rho(0,T_0)^\alpha+\norm{(H^n)'-H'}_\infty\rho(0,T_0)^\beta+\sup_{0\le t\le T_0}|\mathcal E^n_{0,t}-\mathcal E_{0,t}|\to 0.
\end{aligned}
\end{equation}
We also need to prove $$\|H^n-H\|_{\alpha_0;\rho}\to 0.$$ For every $0\le s<t\le T_0$, we have
\[
|(H^n-H)_{s,t}|\le |H^n_t-H_t|+|H^n_s-H_s|\le 2\norm{H^n-H}_\infty,
\]
while, by the definition of the control H\"older seminorm,
\[
|(H^n-H)_{s,t}|\le \|H^n-H\|_{\alpha;\rho}\rho(s,t)^\alpha.
\]
So
\[
|(H^n-H)_{s,t}|\le\min\left\{2\norm{H^n-H}_\infty,\,\|H^n-H\|_{\alpha;\rho}\rho(s,t)^\alpha\right\}.
\]
Set $\theta:=\alpha_0/\alpha\in(0,1)$. Using $\min\{a,b\}\le a^{1-\theta}b^\theta$, we obtain
\begin{align*}
|(H^n-H)_{s,t}|\le&\ \bigl(2\norm{H^n-H}_\infty\bigr)^{1-\theta}\bigl(\|H^n-H\|_{\alpha;\rho}\rho(s,t)^\alpha\bigr)^\theta\\
=&\ 2^{1-\theta}\norm{H^n-H}_\infty^{1-\theta}\|H^n-H\|_{\alpha;\rho}^{\theta}\rho(s,t)^{\alpha\theta}.
\end{align*}
Since $\alpha\theta=\alpha_0$, dividing by $\rho(s,t)^{\alpha_0}$ and taking the supremum over $s<t$ yields
\[
\|H^n-H\|_{\alpha_0;\rho}\le C\norm{H^n-H}_\infty^{1-\alpha_0/\alpha}\|H^n-H\|_{\alpha;\rho}^{\alpha_0/\alpha}.
\]
Therefore, by \eqref{eq:5.23},
\begin{equation}\label{eq:5.28}
\|H^n-H\|_{\alpha_0;\rho}\to 0.
\end{equation}
So, by \eqref{eq:5.27}, \eqref{eq:5.23}, \eqref{eq:5.28} and \eqref{eq:5.29}, we obtain 
$$(U^n,H^n)\to(U,H) \,\text{ in }\, \mathcal{D}_{X,\rho}^{\alpha_0,\beta_0}([0,T_0],\mathbb R^k).$$

It remains to treat $\int_0^t b(s,Y_s)\,d s$. Put
\[
D^n_t=\int_0^t\bigl(b(r,Y^n_r)-b(r,Y_r)\bigr)\,d r.
\]
Stopped-path Lipschitz continuity and uniform convergence of $Y^n$ imply 
\[
\norm{b(\cdot,Y^n)-b(\cdot,Y)}_\infty\longrightarrow0.
\]
Then
\[
\norm{D^n}_\infty\le T_0\norm{b(\cdot,Y^n)-b(\cdot,Y)}_\infty\to 0.
\]
Moreover,
\begin{align*}
\|D^n\|_{\beta_0;\rho}=&\  \sup_{0\le s<t\le T_0}\frac{|D^n_{s, t}|}{\rho(s, t)^{\beta_0}}\\
\le&\ \sup_{0\le s<t\le T_0}\frac{\norm{b(\cdot,Y^n)-b(\cdot,Y)}_\infty|t-s|}{\rho(s, t)^{\beta_0}}\\
\le&\ \sup_{0\le s<t\le T_0}\frac{T_0^{1-\beta_0}\norm{b(\cdot,Y^n)-b(\cdot,Y)}_\infty|t-s|^{\beta_0}}{\rho(s, t)^{\beta_0}}\\
\le&\ \sup_{0\le s<t\le T_0}\frac{T_0^{1-\beta_0}\norm{b(\cdot,Y^n)-b(\cdot,Y)}_\infty\rho(s, t)^{\beta_0}}{\rho(s, t)^{\beta_0}} \hspace{1cm} (\text{by (\ref{eq:5.4})})\\
=&\ T_0^{1-\beta_0}\norm{b(\cdot,Y^n)-b(\cdot,Y)}_\infty\to 0.
\end{align*}
Hence $$\mathcal M(Y^n,(Y^n)')\to \mathcal M(Y,Y')\,\text{ in }\, \mathcal{D}_{X,\rho}^{\alpha_0,\beta_0}([0,T_0],\mathbb R^k),$$ that is, $\mathcal M:\mathcal A_M\to\mathcal A_M$ is continuous in $\mathcal{D}_{X,\rho}^{\alpha_0,\beta_0}([0,T_0],\mathbb R^k)$.
\end{proof}

Finally, we can present our main conclusion.

\begin{theorem}\label{thm:3}
Let $\X=(X,\x)\in \mathcal{C}^{p\text{-var}}([0,T],\mathbb{R}^d)$ be a rough path with $2\le p<3$. Suppose Assumption~\ref{assum:5} and~\ref{assum:6} hold. Then, for every $\xi\in C^{p\text{-var}}([0,t_0],\RR^k)$, there exists a solution $(Y,Y')$ to \eqref{eq:5.1} in the sense of Definition~\ref{def:9}.
\end{theorem}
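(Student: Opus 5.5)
The plan is to obtain a local solution on a short interval from the Schauder fixed point theorem, and then to extend it to $[t_0,T]$ by concatenating finitely many local solutions.

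\textbf{Local existence.} First I shift the problem so that it starts at $t_0$. Replace the time origin $0$ by $t_0$ in Propositions~\ref{prop:5}--\ref{prop:7}. For $t\in[0,t_0]$ the path is frozen to the given history $\xi$, so every functional evaluation $b(t,Y_t)$, $\sigma(t,Y_t)$ with $t\ge t_0$ involves the concatenation $\xi\oplus_{t_0}Y$. Since $\xi$ has finite $p$-variation and is continuous, all the estimates of Theorem~\ref{thm:1} and Theorem~\ref{thm:2} remain valid. They are applied to the concatenated path, whose control on $[t_0,t]$ is only that of the future part. For the remainder bound, Theorem~\ref{thm:1} only needs $\omega_x(u,v)$ for $u\ge t_0$, and the stopped-path Lipschitz bounds are insensitive to the fixed past.

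Take the initial data $y_0=\xi_{t_0}$ and $y_0'=\sigma(t_0,\xi)$. Choose $M$ and $\delta$ as in Proposition~\ref{prop:6}, and choose $T_1>t_0$ with $\rho(t_0,T_1)\le\delta$. This is possible by continuity of $\rho$ and $\rho(t_0,t_0)=0$. Fix exponents $\alpha_0,\beta_0$ satisfying \eqref{eq:5.22}; this is possible since $\alpha+\beta=3/p>1$.

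On $[t_0,T_1]$ the set $\mathcal A_M$ is nonempty, convex and compact in $\mathcal{D}_{X,\rho}^{\alpha_0,\beta_0}$ by Proposition~\ref{prop:5}. The map $\mathcal M$ sends $\mathcal A_M$ into itself by Proposition~\ref{prop:6}, and it is continuous by Proposition~\ref{prop:7}. Schauder's theorem therefore yields a fixed point $(Y,Y')=\mathcal M(Y,Y')$. By construction $Y'=\sigma(\cdot,Y)$ and the integral equation holds.

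\textbf{Membership in $\mathcal{D}_X^{p,q}$.} Since $(Y,Y')\in\mathcal A_M$, we have $|Y'_{s,t}|\le M\rho(s,t)^{1/p}$ and $|R^Y_{s,t}|\le M\rho(s,t)^{2/p}$. Lemma~\ref{lem:5} then gives finite $p$- and $p/2$-variation respectively. Hence $(Y,Y')\in\mathcal{D}_X^{p,p/2}$, which is the required class.

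\textbf{Global extension, the main obstacle.} The constants $M$ and $\delta$ in Proposition~\ref{prop:6} depend on the initial data through $|y_0|$, $|y_0'|$ and $|b(t_0,\cdot)|$. Naive iteration could therefore produce step sizes shrinking to zero. I would avoid this using the global Lipschitz and boundedness assumptions. Since $\sigma$ and $\nabla\sigma$ are globally Lipschitz in $d_\infty$, Assumption~\ref{assum:6} actually makes $\nabla\sigma$ bounded. The quantities $C_0$ entering $M$ depend only on $|\sigma(t,\cdot)|$ along the solution, which I bound a priori by the linear growth $|\sigma(t,x)|\le C(1+\|x_t\|_\infty)$. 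The first attempt would therefore be to show that $\delta$ can be chosen depending only on an upper bound for $\|Y\|_\infty$ on $[0,T]$. That bound I would get from a Gronwall-type estimate using the linear growth of $b$ and $\sigma$ together with the rough integral bound of Lemma~\ref{lem:1}.

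If that a priori bound proves delicate, I fall back on a continuation argument. Let $T^*$ be the supremum of times up to which a solution exists. If $T^*<T$, restart at a time $t_1<T^*$ close to $T^*$ with history $Y|_{[0,t_1]}$, which has finite $p$-variation. Then check that the uniform growth bounds keep $\delta$ bounded below near $T^*$, which contradicts maximality.

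At each restart, the concatenation of the solution on $[t_0,t_1]$ with the one on $[t_1,t_2]$ is again a controlled path. Chen's relation shows this: the remainder across $t_1$ splits as $R_{s,t}=R_{s,t_1}+R_{t_1,t}+Y'_{s,t_1}X_{t_1,t}$, and each piece has finite $p/2$-variation. The rough integrals are additive over adjacent intervals, so the concatenated pair solves \eqref{eq:5.1} on the combined interval. Finitely many steps then cover $[t_0,T]$, because $\rho$ is uniformly continuous and the step size is bounded below.
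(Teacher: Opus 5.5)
Your local step is the paper's argument. You freeze the history, take $y_0=\xi_{t_0}$ and $y_0'=\sigma(t_0,\xi)$, and apply Propositions~\ref{prop:5}--\ref{prop:7} with Schauder on an interval with $\rho(t_0,T_1)\le\delta$. Your check that the fixed point lies in $\mathcal D_X^{p,p/2}$ is correct, and so is the patching identity $R_{s,t}=R_{s,t_1}+R_{t_1,t}+Y'_{s,t_1}X_{t_1,t}$. The paper settles the global extension in one sentence (``repeating the argument on successive subintervals''). You are right that this is where the real difficulty lies, but you do not resolve it. The a priori bound on $\|Y\|_\infty$ that would make $\delta$ uniform is only announced (``I would get from a Gronwall-type estimate''). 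Your continuation fallback needs the same bound on $\sup_{t<T^*}|\sigma(t,Y_t)|$, so it is not an independent route. As it stands, the global part of your proposal is a genuine gap, and the paper's one-line proof does not fill it either.

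Moreover, that Gronwall bound does not follow from the tools you cite, because they are superlinear in the size of the solution. Under Assumption~\ref{assum:6}, $\sigma$ only has linear growth. So at a restart time $t_1$ the datum $|y_0'|=|\sigma(t_1,Y_{t_1})|$ is of order $1+\|Y\|_{\infty;[0,t_1]}$.

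The causal Taylor term contains $\tfrac12\nabla^2\sigma[Y_{s,t},Y_{s,t}]$ (cf.~\eqref{eq:3.8}) with $|Y_{s,t}|\approx|\sigma(s,Y_s)|\,|X_{s,t}|$. Also, $\|H'\|_{\alpha;\rho}$ contains a term of order $K_Y\|Y'\|_\infty\approx|y_0'|^2$. So both $\|R^H\|_{\beta;\rho}$ and $\|H'\|_{\alpha;\rho}$ are quadratic in $|y_0'|$; these are the $M^2$ terms in the proof of Proposition~\ref{prop:6}. Lemma~\ref{lem:1} does not help, since its remainder bound carries exactly these factors.

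Closing $\|Z'\|_{\alpha;\rho}+\|R^Z\|_{\beta;\rho}\le M$ with $M\sim 1+|y_0'|$, together with the constraint $(1+K_Y^p)\delta\le1$, forces $\delta\lesssim(1+|y_0'|)^{-p}$. On such a step, $\|Y\|_{\alpha;\rho}\le K_Y$ still lets $Y$ move by $K_Y\delta^\alpha=O(1)$. Iterating, you only get $|y_0'|\lesssim 1+n$ at the $n$-th restart and step sizes $\delta_n\gtrsim(1+n)^{-p}$. These are summable, so superadditivity of $\rho$ gives no contradiction, and the restart times may accumulate before $T$.

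To finish you need one of two things. The first is boundedness of $\sigma$ and $b$. Then $C_0$, $M$ and $\delta$ in Proposition~\ref{prop:6} are uniform, since the constant in Theorem~\ref{thm:1} does not depend on the path or its history. A greedy partition with $\rho(t_i,t_{i+1})=\delta$ then has at most $\rho(t_0,T)/\delta+1$ pieces. The second is a genuine non-explosion estimate for diffusion coefficients of linear growth, which is a separate problem even for classical RDEs. Neither is supplied in your proposal.
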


\begin{proof}
It suffices to consider $t_0=0$, since the same argument applies to $t_0>0$ with the prescribed history $\xi$ kept fixed.
Choose $T_0>0$ sufficiently small. 
On the interval $[0, T_0]$, Proposition~\ref{prop:5}, Proposition~\ref{prop:6} and Proposition~\ref{prop:7} together with Schauder's fixed point theorem yield a local solution. 
Repeating the argument on successive subintervals, we can extend the solution to the whole interval $[0,T]$.
\end{proof}

\begin{remark}
The result also applies to genuinely path-dependent coefficients that need not be Fr\'echet differentiable with respect to the entire path. 
Examples include the regularized running maximum and integral dependent functionals discussed in~\cite[Example 4.4]{An23}.
Under the regularity conditions specified therein, these functionals satisfy the required Dupire assumptions. 
Consequently, the existence result extends to such coefficients throughout the full range $2\le p<3$, without imposing any additional regularity assumptions.
\end{remark}

\vskip 0.2in

\noindent
{\bf Acknowledgments.} This work is supported by the National Natural Science Foundation of China (12571019), the Natural Science Foundation of Gansu Province (25JRRA644) and Innovative Fundamental Research Group Project of Gansu Province (23JRRA684).

\noindent
{\bf Declaration of interests.} The authors have no conflicts of interest to disclose.

\noindent
{\bf Data availability.} Data sharing is not applicable as no new data were created or analyzed.

\end{document}